\UseRawInputEncoding
\documentclass[11pt,reqno]{amsart}
\usepackage{amsmath, amssymb, amsthm, esint, verbatim, hyperref}
\usepackage{times}
\usepackage{dsfont}
\usepackage{accents}

\usepackage{wrapfig}
\usepackage{tikz}
\usetikzlibrary{decorations.fractals}

\numberwithin{equation}{section}

\hypersetup{
  pdftitle={Regularized distance in space form and its applications},
  pdfauthor={Xiaodong Cao and Ling Xiao},
  pdfsubject={},
  pdfkeywords={},
  pdfpagelayout=SinglePage,
  pdfpagemode=UseOutlines,
  colorlinks,
  bookmarksopen,
  linkcolor=[rgb]{0,0,0.7},
  urlcolor=[rgb]{0,0,0.4},
  citecolor=[rgb]{0.4,0.1,0},
}

\newtheorem{theorem}{Theorem}[section]

\newtheorem{proposition}[theorem]{Proposition}

\newtheorem{lemma}[theorem]{Lemma}

\theoremstyle{definition}
\newtheorem{definition}[theorem]{Definition}
\theoremstyle{remark}
\newtheorem{remark}[theorem]{Remark}
\theoremstyle{remark}

\theoremstyle{remark}

\theoremstyle{remark}

\theoremstyle{remark}

\newcommand{\be}{\begin{equation}}
\newcommand{\ee}{\end{equation}}
\newcommand{\ju}[2]{\begin{array}{#1}#2\end{array}}

\newcommand{\ubar}[1]{\underaccent{\bar}{#1}}
\newcommand*\Laplace{\mathop{}\!\mathbin\bigtriangleup}
\newcommand{\ssubset}{\subset\joinrel\subset}
\newcommand{\td}{\tilde}

\newcommand{\lt}{\left}
\newcommand{\rt}{\right}

\newcommand{\goto}{\rightarrow}
\newcommand{\R}{\mathbb{R}}

\newcommand{\e}{\epsilon}
\newcommand{\s}{\sigma}
\newcommand{\p}{\partial}
\newcommand{\al}{\alpha}

\newcommand{\fb}{\mathfrak b}
\newcommand{\fbo}{\mathfrak{b}^1}
\newcommand{\fbz}{\mathfrak{b}^0}
\newcommand{\N}{\mathbb N}
\newcommand{\mH}{\mathbb H}
\newcommand{\mS}{\mathbb S}
\newcommand{\bn}{\bar\nabla}
\newcommand{\pr}{\phi(r)}
\newcommand{\prt}{\phi^2(r)}
\newcommand{\dpr}{\dot\phi(r)}
\newcommand{\pro}{\phi(\rho)}
\newcommand{\prot}{\phi^2(\rho)}
\newcommand{\dpro}{\dot\phi(\rho)}
\newcommand{\nb}{\nabla}
\newcommand{\lu}{\ubar{u}}
\newcommand{\la}{\lambda}
\newcommand{\lul}{\ubar{u}_{\text{loc}}}
\newcommand{\lbg}{\mathbf{\ubar{g}}}
\newcommand{\lbf}{\mathbf{\ubar{f}}}
\newcommand{\bbg}{\mathbf{\bar g}}
\newcommand{\bbf}{\mathbf{\overline f}}

\title{Regularized distance in space forms and its application}
\author{Xiaodong Cao and Ling Xiao}

\keywords{}

\address{Department of Mathematics, Cornell University, Ithaca, NY 14853}
\email{xiaodongcao@cornell.edu}

\address{Department of Mathematics, University of Connecticut, Storrs, CT 06269}
\email{ling.2.xiao@uconn.edu}

\begin{document}
\maketitle
\begin{center}
Dedicated to Yanyan Li on his 65th birthday with admiration and friendship.
\end{center}

\begin{abstract}
We construct regularized distance functions for star-shaped domains in space forms $\N^n(K)$ and derive explicit formulas for their Hessians in terms of the principal curvatures of the boundary. As an application, we use these regularized distance functions to construct barriers and prove the existence of an admissible $C^{1,1}$ solution to the degenerate  $\sigma_k$ equation on ring domains in $\R^n$ and $\mH^n$, under suitable conditions on the boundary components.
\end{abstract}
\maketitle
\tableofcontents

\section{Introduction}
It is well understood that the distance function is a powerful tool for constructing barriers in the study of Dirichlet problems. However, for most domains $\Omega,$ the distance function to the boundary $\p\Omega$ is only well defined in a small neighborhood of $\p\Omega.$ This limitation creates difficulties in barrier constructions, especially when $\Omega$ has a genus greater than or equal to one.
Therefore, it is sometimes more convenient to use a regularized distance, as described below, to construct barriers.

We note that the idea of finding replacements for the distance function to construct barriers appeared many years ago, in specialized circumstances. For example, a regularized distance for arbitrary domains was constructed by Triebel \cite{Tri78}, while a construction for Lipschitz domains was given by Ne\v cas \cite{Nec62}. An alternative construction of a regularized distance appears in \cite{Lie85}, where the properties of the regularized distance are also studied in detail. Inspired by these earlier works, the second-named author constructed a regularized distance for star-shaped domain in $\R^n$ and studied its Hessian (see \cite{Xiao22}). As an application, a sharp generalized Minkowski inequality was proved in \cite{Xiao22} for smooth, $(k-1)$-convex, star-shaped domains. Another application of this regularized distance can be found in \cite{LX25}, where the existence of a solution to the exterior Dirichlet problem for Hessian equations on a non-convex ring was established.

Our primary objectives are to further develop the ideas introduced in
\cite{Xiao22} to construct regularized distance functions in space forms and to investigate their Hessian properties.

We denote by $\N^{n}(K)$ a space form of sectional curvature $K=-1,0$, or $+1,$ and denote by $g^{\N}:=ds^{2}$ the Riemannian metric of $\N^{n}(K)$. In $\mathbb{R}^{n}$, let $\mS^{n-1}$ be the unit sphere centered at the origin. Suppose that $(z, r)$ are spherical coordinates in $\mathbb{R}^{n},$ where $z \in \mS^{n-1}$. The standard metric on $\mS^{n-1}$ induced from $\mathbb{R}^{n}$ is denoted by $dz^{2}$. Let $0<a\leq\infty$, $I = [0, a),$ and $\pro$ is a
nonnegative function on $I.$ Then
$$
g^{\N}:=ds^{2}=dr^{2}+\phi^{2}(r)dz^{2}
$$
is a model of $\N^{n}(K)$. More precisely, $\N^{n}(K)$ is the Euclidean space $\mathbb{R}^{n}$ if $\phi(r)=r,\,\, a=\infty$; $\N^{n}(K)$ is the unit $n$-sphere $\mS^{n}$ with constant sectional curvature $+1$ if $\phi(r)=\sin r,\,\,a=\pi$; and $\N^{n}(K)$ is the hyperbolic space $\mH^{n}$ with constant sectional curvature $-1$ if
$\phi(r)=\sinh r,\,\, a=\infty.$

Before stating our main results, we first define regularized distance for a domain $\Omega$. In this paper, we define the \it{\textbf{signed distance}} to
$\p\Omega$ by
\[
d(x)=\left\{\begin{aligned}
&-\text{dist}(x, \p\Omega)\,\,x\in\Omega\\
&\text{dist}(x, \p\Omega)\,\, x\notin\Omega.
\end{aligned}
\right.
\]

\begin{definition}
\label{def-reg-dist}
Let $\Omega\subset\N^n(K)$ be a smooth, compact domain in space forms, containing the origin $\{0\}$ as an interior point and with nonempty boundary $\p\Omega.$ A function $\vartheta$ is called a {\it\textbf{regularized distance}} for $\Omega$
if $\vartheta\in C^2(\mathbb \N^n(K)\setminus\{0\})\cup C^{0, 1}(\N^n(K))$
and if the ratios $\vartheta(x)/d(x)$ and $d(x)/\vartheta(x)$ are positive and uniformly bounded from above for all $x\in\N^n(K)\setminus\p\Omega.$
Here, $d(x)$ is the signed distance to $\p\Omega.$
\end{definition}
We note that although our definition of the regularized distance differs from the one in \cite{Lie85}, the two share an important common property: both are proportional to the signed distance to $\p\Omega.$

We prove the following theorems.
\begin{theorem}
\label{thm1}
Let $\Omega\subset\N^n(K)$ be a smooth, star-shaped domain with
respect to the origin in the space forms $\N^n(K)$. That is, $\Gamma:=\p\Omega$ can be parameterized as a
graph of the radial function $\rho(z): \mS^{n-1}\goto\R,$ i.e.,
\[\Gamma=\{(z, \rho(z)): z\in\mS^{n-1}\}.\] Then, there is a regularized distance for $\Omega$ defined by
\[\fb(z, r): =\frac{\phi(r-\rho(z))}{\phi(\rho(z))}\,\,\mbox{for}\,\,(z, r)\in\N^n(K).\]
Moreover, for any $(z, r)\in\N^n(K)\setminus\{0\},$ under a specifically chosen coordinate system
\be\label{hessian-thm-b}
\begin{aligned}
\text{Hessian}(\fb)&=\left[\ju{ccccc}{\frac{w^3}{\phi(r)}a_{11}&\frac{w^2}{\phi(r)}a_{12}&\cdots&\frac{w^2}{\phi(r)}a_{1n-1}&0\\
\frac{w^2}{\phi(r)}a_{12}&\frac{w}{\phi(r)}a_{22}&\cdots&0&0\\
\vdots&\vdots&\ddots&\vdots\\
\frac{w^2}{\phi(r)}a_{1n-1}&0&\cdots&\frac{w}{\phi(r)}a_{n-1n-1}&0\\
0&0&\cdots&0&0}\right]-K\fb I_n.
\end{aligned}
\ee
where $I_n$ is the $n\times n$ identity matrix, $w=\sqrt{1+\frac{|\nabla'\rho|^2}{\phi^2(\rho)}},$ and the eigenvalues of $(a_{ij}),$ namely $\kappa[a_{ij}]=(\kappa_1, \cdots, \kappa_{n-1}),$ are the principal curvatures of $\Gamma$ at $(z, \rho(z)).$
\end{theorem}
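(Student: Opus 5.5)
The plan is to exploit the addition formula for $\phi$, which holds in all three space forms:
\[
\phi(r-\rho)=\phi(r)\dot\phi(\rho)-\dot\phi(r)\phi(\rho).
\]
This holds because $\phi(t)=t,\sin t,\sinh t$ with $\dot\phi=1,\cos t,\cosh t$. Hence
\[
\fb(z,r)=\phi(r)\,\xi(z)-\dot\phi(r),\qquad \xi(z):=\frac{\dot\phi(\rho(z))}{\phi(\rho(z))},
\]
which splits $\fb$ into a product of a radial and an angular factor plus a purely radial term. For the radial term I will use the standard space-form identity $\bar\nabla^2\dot\phi(r)=-K\dot\phi(r)\,g^{\N}$. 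For the product term I will compute the Hessian in the warped metric $dr^2+\phi^2dz^2$ using $\bar\nabla_{\p_r}\p_r=0$, $\bar\nabla_{e_i}\p_r=\frac{\dot\phi}{\phi}e_i$ and $\bar\nabla_{e_i}e_j=\nabla'_{e_i}e_j-\phi\dot\phi\,\sigma_{ij}\p_r$. This gives
\[
\bar\nabla^2(\phi\xi)(\p_r,\p_r)=\ddot\phi\,\xi=-K\phi\xi,\qquad
\bar\nabla^2(\phi\xi)(\p_r,\tfrac{e_i}{\phi})=\tfrac{1}{\phi}(\dot\phi\xi_i-\dot\phi\xi_i)=0,
\]
\[
\bar\nabla^2(\phi\xi)\big(\tfrac{e_i}{\phi},\tfrac{e_j}{\phi}\big)=\tfrac{1}{\phi}\big(\nabla'^2_{ij}\xi+\xi\sigma_{ij}\big)-K\phi\xi\,\sigma_{ij},
\]
where I use $\dot\phi^2-K\phi^2=1$ in the last step. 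Adding the two pieces yields
\[
\text{Hessian}(\fb)=\frac{1}{\phi(r)}\begin{pmatrix}\nabla'^2\xi+\xi\sigma&0\\0&0\end{pmatrix}-K\fb\, I_n .
\]
This already explains the vanishing last row and column and the term $-K\fb I_n$. The key observation is that the tangential block depends on $r$ only through the factor $1/\phi(r)$.

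The main step, and the one I expect to be the real obstacle, is to identify the $(n-1)\times(n-1)$ matrix $\nabla'^2\xi+\xi\sigma$ at $z$ with the second fundamental form of $\Gamma$ at $(z,\rho(z))$. I will recall the classical formulas for a radial graph in a warped product. The induced metric is $g_{ij}=\phi^2(\rho)\sigma_{ij}+\rho_i\rho_j$. The outer normal is proportional to $\p_r-\rho_i e_i/\phi^2$. The second fundamental form is
\[
h_{ij}=\frac{1}{w\phi}\Big(-\rho_{ij}+\frac{2\dot\phi}{\phi}\rho_i\rho_j+\phi\dot\phi\,\sigma_{ij}\Big),
\]
with sign conventions to be fixed so that round spheres have positive curvature. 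Then I expand
\[
\xi_i=-\frac{\rho_i}{\phi^2},\qquad \xi_{ij}=-\frac{\rho_{ij}}{\phi^2}+\frac{2\dot\phi}{\phi^3}\rho_i\rho_j,
\]
using $\frac{d}{d\rho}\frac{\dot\phi}{\phi}=-\frac{1}{\phi^2}$. Together with $\xi\sigma_{ij}=\frac{\dot\phi}{\phi}\sigma_{ij}$, this gives exactly
\[
\nabla'^2\xi+\xi\sigma=\frac{w}{\phi(\rho)}\,h_{ij}.
\]
The factors of $\phi(\rho)$ must cancel correctly, and I will double-check this via the sphere case, $\rho\equiv$ const.

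Next I convert from the $\sigma$-orthonormal frame to one in which the principal curvatures appear. At the chosen point I rotate the frame on $\mS^{n-1}$ so that $\nabla'\rho=|\nabla'\rho|e_1$. Then the induced metric in this frame is $\phi^2(\rho)\,\mathrm{diag}(w^2,1,\dots,1)$. Let $P=\mathrm{diag}(w,1,\dots,1)$ and set $a=P^{-1}(h/\phi^2(\rho))P^{-1}$, suitably normalized. Then $\kappa[a_{ij}]$ are the principal curvatures, because $a$ is the shape operator expressed in a $g$-orthonormal frame. Writing $h$ in terms of $a$ produces conjugation by $P$, which multiplies entry $(1,1)$ by $w^2$, entries $(1,j)$ by $w$, and entries $(i,j)$ with $i,j\ge2$ by $1$. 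Combined with the overall factor $w$ from the previous step, this gives the pattern $w^3,w^2,w$ in \eqref{hessian-thm-b}. A possible further rotation in $e_2,\dots,e_{n-1}$ is what makes the lower block diagonal, and this is the ``specifically chosen coordinate system.''

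Finally I verify the regularized-distance properties.
\begin{itemize}
\item Regularity: $\fb$ is smooth away from $0$. At the origin it is Lipschitz, since $\phi(r-\rho(z))$ depends on $z$ only in a bounded Lipschitz way as $r\to0$.
\item Sign: $\fb$ has the sign of $r-\rho(z)$, since $\phi$ is odd and increasing on the relevant range. In $\mS^n$ this requires $\Omega$ to lie in a hemisphere-type region, which I will assume.
\item Comparison with $d$: near $\Gamma$, $\fb$ vanishes to first order with $|\bar\nabla\fb|=w/\phi(\rho)\neq0$. Away from $\Gamma$, both $\fb$ and $d$ are bounded and bounded away from zero on compact sets. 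These two facts give the two-sided bounds on $\fb/d$ and $d/\fb$ required by Definition~\ref{def-reg-dist}.
\end{itemize}
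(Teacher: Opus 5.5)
Your Hessian computation is correct, modulo two slips noted below. It is the same frame computation as the paper's, but organized around a different idea.

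\textbf{Comparison with the paper.} The paper differentiates $\fb$ directly in the normal coordinates of Subsection~\ref{sub-rdf.1}. It obtains $\fb_1=-\frac{\phi(r)}{\phi^2(\rho)}\rho_1$, $\fb_{11}$, $\fb_{1\al}$, $\fb_{\al\beta}$, $\fb_r$ and $\fb_{rr}=-K\fb$, substitutes them into \eqref{hess1.1}--\eqref{hess1.3}, and matches each entry with $a_{ij}=\gamma^{ik}h_{kl}\gamma^{lj}$. The addition formulas for $\phi$ are used silently, entry by entry: once in simplifying $\fb_1$, and again in absorbing $\frac{\dot\phi(r)}{\phi(r)}\fb_r$ into $\frac{\dot\phi(\rho)}{\phi(r)\phi(\rho)}-K\fb$.

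Your splitting $\fb=\phi(r)\xi(z)-\dot\phi(r)$ applies the addition formula once and makes the structure visible before any curvature enters. The term $-\dot\phi(r)$ contributes exactly $K\dot\phi(r)\,g^{\N}$. The term $\phi(r)\xi$ has no mixed $(\tau_a,\tau_r)$ part, has $rr$-entry $-K\phi\xi$, and has tangential block $\frac{1}{\phi(r)}(\nabla'^2\xi+\xi\sigma)-K\phi\xi\,\sigma$. So the zero last row and column, the $-K\fb I_n$ term and the $1/\phi(r)$ scaling come for free. The curvature identification becomes a statement about the single $r$-independent tensor $\nabla'^2\xi+\xi\sigma$ on $\mS^{n-1}$. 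The decomposition also gives the Lipschitz property at $0$ cleanly, since $\dot\phi(r)$ is smooth there and $\phi(r)\xi(z)$ is a Lipschitz cone. The paper's route is more mechanical but needs no such choice.

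\textbf{Two slips.} First, the identity is $\dot\phi^2+K\phi^2=1$, not $\dot\phi^2-K\phi^2=1$. The formulas you actually display (the tangential block, and $\frac{d}{d\rho}\frac{\dot\phi}{\phi}=-\frac{1}{\phi^2}$) are the ones the correct identity gives.

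Second, the second fundamental form of the radial graph is $h_{ij}=\frac{1}{w}\big(-\rho_{ij}+\frac{2\dot\phi}{\phi}\rho_i\rho_j+\phi\dot\phi\,\sigma_{ij}\big)$, with no extra factor $1/\phi(\rho)$. Your planned sphere check in $\R^n$ gives $h=\rho_0\sigma$, $g=\rho_0^2\sigma$, $\kappa=1/\rho_0$. With the corrected $h$, and $P=\mathrm{diag}(w,1,\dots,1)$, you get $\nabla'^2\xi+\xi\sigma=\frac{w}{\phi^2(\rho)}h=w\,PaP$. Here $a=\phi^{-2}(\rho)P^{-1}hP^{-1}$ is exactly the paper's $a_{ij}$, and the $w^3,w^2,w$ pattern follows.

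\textbf{The regularized-distance claim.} This is where your argument has a real hole, although the paper itself only asserts this part (``straightforward to verify''). Bounds on compact sets do not give the uniform bounds of Definition~\ref{def-reg-dist} on the non-compact spaces $\R^n$ and $\mH^n$.
\begin{itemize}
\item In $\R^n$ the hole is easily closed, since $\fb/d\to 1/\rho(z)$ uniformly as $r\to\infty$.
\item In $\mH^n$ it cannot be closed. For $r>\max\rho$ one has $\fb\ge\sinh(r-\max\rho)/\sinh(\max\rho)$, which grows like $e^{r}$, while $d\le r+\max\rho$. So $\fb/d$ is unbounded, and the two-sided comparison holds only on bounded sets. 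That is all the applications to the ring $\mathcal M$ use.
\item In $\mS^n$, your own decomposition $\fb=\sin r\,\xi(z)-\cos r$ shows that $\fb$ is only Lipschitz at the antipode $r=\pi$. This already happens for any geodesic ball centered at $0$ of radius $\neq\pi/2$. So the $C^2$ requirement away from $0$ fails there.
\end{itemize}
On the other hand, the sign claim in $\mS^n$ needs no hemisphere assumption, since $|r-\rho|<\pi$. You should state and check this part on bounded regions, avoiding the antipode in $\mS^n$, rather than globally.
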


A domain $\mathcal M\subset\N^n(K)$ is called a {\it\textbf{ring}} if $\mathcal M=\Omega_0\setminus\bar\Omega_1,$ where $\Omega_0$ and $\Omega_1$ are two
bounded domains in $\N^n(K)$ such that $\Omega_1\ssubset\Omega_0.$
As an application of Theorem \ref{thm1}, when $\N^n(K)=\R^n$ and $\mH^n,$ we consider the following degenerate $\s_k$ equation defined on a ring.
\be\label{int-main}
\left\{\begin{aligned}
F\lt(\bn^2u, u\rt):=\s_k^{\frac{1}{k}}\lt(\lambda\lt(\bn^2u+Kug^\N\rt)\rt)&=0\,\,&\mbox{in}\,\,\mathcal{M}\\
u&=\frac{K+1}{2}\,\,&\mbox{on}\,\,\p\Omega_0\\
u&=\frac{K-1}{2}\,\,&\mbox{on}\,\,\p\Omega_1,
\end{aligned}\right.
\ee
where $$\s_k(\lambda)=\sum\limits_{1\leq i_1<\cdots<i_k\leq n}\la_{i_1}\cdots\la_{i_k}$$ is the $k$-th  elementary symmetric function of $\la\in\R^n,$ $\bn^2 u$ is the Hessian of $u$ in $\N^n(K),$ and for a $(0, 2)$ tensor $h$ in $\N^n(K),$
$\la(h)=(\la_1, \cdots, \la_n)$ denotes the eigenvalues of $h$ with respect to the metric $g^\N.$

We denote $\Sigma_k$ as the G{\aa}rding's cone
\[\Sigma_k=\{ \la\in \R^{n}: \s_m(\la) > 0, m = 1,\cdots , k\}.\]
For any open set $U\subset\N^n(K)$, a function $v\in C^{2}(U)$ is said to be {\it\textbf{ admissible}} if $\la(\bn^2v+Kvg^\N)\in\bar\Sigma_{k}.$ A $C^2$ regular hypersurface $\Gamma\subset\N^{n}$ is called \textbf{strictly k-convex} ($k$-convex) if its principal curvature vector
$\kappa(X)\in\Sigma_k$ ($\kappa(X)\in\bar\Sigma_k$) for all $X\in\Gamma.$ We say a domain $\Omega$ is strictly $k$-convex ($k$-convex) if $\partial\Omega$ is strictly $k$-convex ($k$-convex).

In this paper, we assume that $\p\Omega_i,$ $i=0, 1,$ is a graph of a radial function $\rho^i(z): \mS^{n-1}\goto\R,$ that is,
$\p\Omega_i=\{(z, \rho^i(z)): z\in\mS^{n-1}\}.$ We also adopt the convention that for an $n-1$ dimensional hypersurface $\p\Omega\subset\N^n,$ strict $(n-1)$-convexity and strict $n$-convexity are equivalent.
In particular, under this convention, the condition $\kappa(\p\Omega)\in\Sigma_{n}$ simply means $\p\Omega$ is strictly convex. We prove
\begin{theorem}
\label{thm2}
For any $2\leq k\leq n$ let $\mathcal M$ be a domain that satisfies
\begin{itemize}
\item when $\N^n=\R^n,$ $\p\Omega_i,$ $i=0, 1,$ are strictly $(k-1)$-convex,
\item when $\N^n=\mH^n,$ $\p\Omega_i,$ $i=0, 1,$ are strictly $k$-convex and $\max\limits_{z\in\mS^{n-1}}(\rho^0(z)-\rho^1(z))<\tanh^{-1}\lt(\frac{1}{\sqrt{n-2}}\rt)$,
\end{itemize}
Then there is an admissible solution $u\in C^{1, 1}(\bar{\mathcal M})$ of \eqref{int-main} with
\be\label{app-thm-c2}
\|u\|_{C^{1, 1}(\mathcal M)}\leq C.
\ee
\end{theorem}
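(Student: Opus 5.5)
The plan is the standard approximation scheme for degenerate Hessian equations, in the spirit of Guan's and Li--Xiao's work on rings. For $\e>0$ consider the nondegenerate problem
\[
\s_k^{1/k}\bigl(\la(\bn^2u^\e+Ku^\e g^\N)\bigr)=\e \quad\text{in } \mathcal M,
\]
with the same Dirichlet data as \eqref{int-main}. We then prove estimates for $u^\e$ in $C^{1,1}(\bar{\mathcal M})$ that do not depend on $\e$. Existence of smooth admissible solutions $u^\e$ will follow from the continuity method, together with Evans--Krylov and Schauder theory, once we have (i) an admissible subsolution $\lu$ with the correct boundary values and (ii) uniform $C^2$ bounds. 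Letting $\e\to0$ and using Arzel\`a--Ascoli then produces an admissible $u\in C^{1,1}$ satisfying \eqref{app-thm-c2}.

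\textbf{Step 1: subsolution from the regularized distance.} Let $\fb^0,\fb^1$ be the regularized distances of Theorem \ref{thm1} for $\Omega_0$ and $\Omega_1$. In $\R^n$ we take $\lu$ to be a combination such as $\lu=A\fb^0-B\fb^1+c$, or a convex increasing function of $\fb^1$ glued to $\fb^0$. The constants are chosen so that $\lu=1$ on $\p\Omega_0$ (where $\fb^0=0$) and $\lu=-1$ on $\p\Omega_1$. The Hessian formula \eqref{hessian-thm-b} shows that $\bn^2\fb^i$ has eigenvalues built from $\kappa(\p\Omega_i)$ in the tangential block and $0$ in the radial direction. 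Strict $(k-1)$-convexity of the $(n-1)$-dimensional block $(a_{ij})$, combined with the extra zero eigenvalue, gives an $n\times n$ matrix in $\bar\Sigma_k$. Adding a small multiple of a strictly convex function, for instance $\delta|x|^2$ adjusted to the boundary data, makes it strictly $k$-admissible. The factors $w$ and $w^3$ only rescale rows and columns, and after conjugation this preserves membership in the cone.

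In $\mH^n$ the term $-K\fb I_n=\fb I_n$ appears, and $\fb^0<0$ inside $\Omega_0$. So in $\la(\bn^2\lu-\lu g)$ we must control the cross terms involving $\lu$ and $\fb$. This is why strict $k$-convexity, rather than $(k-1)$-convexity, is needed. It is also where the width bound $\rho^0-\rho^1<\tanh^{-1}(1/\sqrt{n-2})$ should enter: it bounds $|\fb|=\phi(\rho-r)/\phi(\rho)$ relative to $\dot\phi/\phi$ terms, so that the negative multiple of the identity is dominated.

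\textbf{Step 2: a priori estimates.} The $C^0$ bound comes from the comparison principle, using $\lu$ below and the solution of $\Delta$-type problems or constants above. The gradient bound reduces to a boundary gradient bound by the usual maximum principle for $|\nb u|^2$ with lower-order corrections, since $F$ is concave and $K$ is constant. The boundary gradient bound comes from $\lu\le u^\e\le \bar u$, where $\bar u$ is a harmonic-type supersolution.

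For second derivatives, the interior-to-boundary reduction follows Guan's argument. We apply the maximum principle to $\bn_{\xi\xi}u+\text{(lower order)}$; the term $Kug^\N$ contributes only bounded quantities. At the boundary, tangential--tangential derivatives follow from the boundary data. Tangential--normal derivatives use a barrier of the form $A(\lu-u)+$ linear terms in the operator $L=F^{ij}\bn_{ij}$, and the strict subsolution gives $L(\lu-u)\ge c\sum F^{ii}$.

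\textbf{Main obstacle.} The double-normal estimate $|u_{\nu\nu}|\le C$ uniform in $\e$ is the crux. Since the right side is $\e\to0$, the standard nondegenerate argument fails. We would follow Ivochkina--Trudinger--Wang and Guan's degenerate approach: at the boundary we need $\s_{k-1}$ of the tangential part of $\bn^2u$ (plus $Ku$), which equals $u_\nu\,\kappa$-terms, to be bounded below by a positive constant. The boundaries are level sets on which $u_\nu$ has a definite sign, bounded away from zero because $u$ lies between barriers. So this reduces to strict $(k-1)$-convexity of $\p\Omega_i$ in $\R^n$; in $\mH^n$ the additional $\pm u\, g$ term has sign $\pm1$, and strict $k$-convexity supplies the margin needed. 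Once this lower bound holds, $u_{\nu\nu}$ is controlled by the equation, uniformly in $\e$. I expect verifying admissibility of the $\mH^n$ subsolution under the stated width condition, together with this double-normal bound, to be the hardest part.
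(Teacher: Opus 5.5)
Your overall scheme matches the paper's. Both approximate by nondegenerate problems, build barriers from $\fb^0,\fb^1$, and get $C^0$ and boundary $C^1$ bounds from $\lu\le u\le\bar u$. Both use a strict-subsolution barrier for the tangential--normal derivatives, control $u_{\nu\nu}$ through a positive lower bound on $\s_{k-1}$ of the tangential block, and finish with Guan's global $C^2$ test function. Your $\s_k^{1/k}=\e$ approximation is a harmless variant. For it, $\lu$ is a strict subsolution with margin independent of $\e$, which would even let you skip the paper's local subsolution $\lul$ in the uniform estimates.

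The genuine gap is Step 1. This is where the curvature and width hypotheses are used in the barrier construction, and the admissibility argument you give there is false:
\begin{itemize}
\item Appending a zero eigenvalue to $\kappa\in\Sigma_{k-1}$ does not land in $\bar\Sigma_k$, since $\s_k(\kappa,0)=\s_k(\kappa)$ can be negative (e.g.\ $n=3$, $k=2$, $\kappa=(3,-1)$).
\item The tangential block in Theorem~\ref{thm1} is $\frac{w}{\phi(r)}DaD$ with $D=\mathrm{diag}(w,1,\dots,1)$. This is a congruence, not a similarity. One has $\s_j(DaD)=w^2\s_j(a)-(w^2-1)\s_j(a_{\al\beta})$, which can be negative for large $w$, so the cone is not preserved.
\item $A\fb^0-B\fb^1+c$ is not constant on either boundary component; on $\p\Omega_0$ it equals $c-B\fb^1$, which varies with $z$.
\item With $B>0$, $-B\bn^2\fb^1$ has an anti-admissible tangential part.
\item Adding $\delta|x|^2$ cannot repair a matrix with $\s_k<0$.
\end{itemize}

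The missing idea is to compose $\fb$ with a function $\psi$ satisfying $\psi''\gg\psi'>0$; the paper uses $e^{T\fb}$ with $T$ large. The rank-one term $\psi''\,\nb\fb\otimes\nb\fb$ then contributes exactly
\[
\psi''\Bigl(\frac{\psi' w}{\phi(r)}\Bigr)^{j-1}\Bigl[\frac{\dot\phi^2(r-\rho)}{\phi^2(\rho)}\,w^2\s_{j-1}(a)+K(w^2-1)\fb^2\s_{j-1}(a_{\al\beta})\Bigr]
\]
to $\s_j$, and this dominates the $O((\psi')^j)$ remainder.
\begin{itemize}
\item In $\R^n$ the bracket equals $\frac{w^2}{\rho^2}\s_{j-1}(a)>0$ for $j\le k$. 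This is precisely why strict $(k-1)$-convexity suffices there.
\item In $\mH^n$ the negative term is controlled by $\s_{j-1}(a_{\al\beta})<(n-j)\s_{j-1}(a)$, which is where strict $k$-convexity enters.
\item In $\mH^n$ one also needs $\cosh^2(r-\rho)>(n-2)\sinh^2(r-\rho)$ throughout $\mathcal M$, which is exactly the width hypothesis.
\end{itemize}
So the width bound does not serve to dominate identity terms. Those are positive for the inner piece and of relative size $1/T$ for the outer piece. The inner and outer functions are then glued by Guan's smooth maximum, using concavity of $F$ and, in $\mH^n$, absorbing the zeroth-order term.

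Relatedly, your double-normal step in $\mH^n$ does not need strict $k$-convexity. The boundary values $0$ and $-1$ make the shift $-ug$ equal to $0$ or $+I$, so $(k-1)$-convexity together with $u_\nu\ge c$ already suffices.
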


\section{Hessian in space forms}
\label{sec-hess}
In this section, we derive expressions for the Hessian in space forms.
In the following, we will denote the standard connection in $\N^n$ by $\bn.$
Now, we choose a local orthonormal frame $\{e_1, \cdots, e_{n-1}\}$ on the unit sphere $\mathbb{S}^{n-1}$. Let $\tau_{a}:=\dfrac{e_{a}}{\phi(r)}$, $1\leq a\leq n-1,$ which is the orthonormal frame on the sphere with radius $\phi(r),$ and we also let $\tau_r:=\frac{\p}{\p r}$.
Then we have
\be\label{hess1}
\bn f=f_r\tau_r+\sum\limits_{a=1}^{n-1}\frac{1}{\phi(r)}f_{a}\tau_{a},
\ee
where $f_r=\tau_r f, f_{a}=e_{a}f.$ Moreover,
\[
\begin{aligned}
\bn^2f&=\bn_{\tau_r}\lt(f_r\tau_r+\sum\limits_{a=1}^{n-1}\frac{1}{\phi(r)}f_{a}\tau_{a}\rt)\otimes \tau_r\\
&+\sum\limits_{b=1}^{n-1}\frac{1}{\phi(r)}\bn_{e_{b}}\lt(f_r\tau_r+\sum\limits_{a=1}^{n-1}\frac{1}{\phi(r)}f_{a}\tau_{a}\rt)\otimes \tau_{b}.
\end{aligned}
\]
It is well known that $V=\phi(r)\frac{\p}{\p r}$ is a conformal Killing vector field (see \cite{GL15} Lemma 2.1).
This implies
\[\bn_{\tau_a}V=\dot\phi(r)\tau_a,\,\,1\leq a\leq n-1,\]
and
\[\bn_{\tau_r}V=\dot\phi(r)\tau_r,\]
where $\dot\phi(r):=\frac{d\phi(r)}{dr}.$
Therefore, for $1\leq a, b\leq n-1$ we have
\[ \bn_{\tau_{a}}\tau_r=\frac{\dot\phi(r)}{\phi(r)}\tau_{a},\,\,\bn_{\tau_r}\tau_r=0.\]
Moreover, by $\bn_{\tau_b}\lt<\tau_a, \tau_r\rt>=0$ we obtain
$$\bn_{\tau_{b}}\tau_{a}=-\frac{\dot\phi(r)}{\phi(r)}\tau_r\delta_{ab}.$$
Finally, we compute
\[\bn_{\tau_r}\tau_a=\bn_{\tau_r}\lt(\frac{1}{\pr}\rt)e_a+\frac{1}{\pr}\bn_{\tau_r}e_a
=-\frac{\dpr}{\prt}e_a+\bn_{\tau_a}\tau_r=0.\]

Thus, the Hessian of $f$ in the space form is
\be\label{hess1.1}\bn^2_{ab}f=\bn^2f(\tau_{a},\tau_{b})=\frac{1}{\prt}f_{ab}+\frac{\dpr}{\pr}f_r\delta_{ab},\ee
\be\label{hess1.2}\bn^2_{a r}f=\bn^2f(\tau_{a},\tau_r)=\frac{1}{\pr}f_{a r}-\frac{\dpr}{\prt}f_{a},\ee
and
\be\label{hess1.3}\bn^2_{rr}f=\bn^2f(\tau_r,\tau_r)=f_{rr}.\ee
Here $1\leq a, b\leq n-1,$ $f_{ab}=e_{b}e_{a}f, f_{a r}=\tau_re_{a} f,$ and $f_{rr}=\tau_r\tau_rf$.

\section {Regularized distance function}
\label{sec-rdf}

\subsubsection{Radial graph in $\N^n$}
\label{sub-rdf.1}
Let $(\Gamma, g)$ be a hypersurface in $\N^n$ with induced metric $g.$ In the following, we shall give the local expression of induced metric
and second fundamental form of $\Gamma$ when
\[\Gamma:=\{(z, \rho(z)): z\in\mS^{n-1}\}\]
is a radial graph in $\N^n.$ Let $z^1, \cdots, z^{n-1}$ be smooth local coordinates in a coordinate neighborhood $U\subset\mS^{n-1},$
then $e_i:=\frac{\p}{\p z^i},\,\,i=1, \cdots, n-1,$ are the corresponding local frame of tangent vectors on $U.$
We shall denote by $e$ the standard spherical metric, $\nb'$ the covariant derivative with respect to $e,$ and $e_{ij}:=e(e_i, e_j).$
Then the first and second fundamental form of $\Gamma$ are (see \cite{BLO02})
\[g_{ij}=\prot e_{ij}+\rho_i\rho_j,\,\,
g^{ij}=\frac{1}{\prot}\lt(e^{ij}-\frac{\rho^i\rho^j}{\prot+|\nabla'\rho|^2}\rt),
\]
and
\[h_{ij}=\frac{\pro}{\sqrt{\prot+|\nb'\rho|^2}}
\lt(-\nb'_{ij}\rho+\frac{2\dpro}{\pro}\rho_i\rho_j+\pro\dpro e_{ij}\rt),\]
where $\rho_i=e_i\rho,$ $e^{ij}=(e_{ij})^{-1},$ and $\rho^i=e^{ij}\rho_j.$

Now, at any fixed point $z_0\in\mS^{n-1},$ let $\{z^1, \cdots, z^{n-1}\}$ be the normal coordinates at $z_0.$ Then the principal curvatures of
$\Gamma$ at $(z_0, \rho(z_0))$ are the eigenvalues of the symmetric matrix $(a_{ij})$ for
\[a_{ij}:=\gamma^{ik}h_{kl}\gamma^{lj}.\]
Here, $\gamma^{ij}:=\frac{1}{\pro}\lt(\delta_{ij}-\frac{\rho_i\rho_j}{\phi^2(\rho)w(1+w)}\rt)$ and $w:=\sqrt{1+\frac{|\nb'\rho|^2}{\prot}}.$ We note that
$\gamma^{ij}$ is the square root of $g^{ij},$ i.e., $\sum\limits_k\gamma^{ik}\gamma^{kj}=g^{ij}.$ To simplify the expression of $a_{ij}$ at $z_0,$ we may rotate the coordinates
such that $|\nb'\rho(z_0)|=\rho_1(z_0)$ and
\[\nb'_{\alpha\beta}\rho(z_0)=\frac{\p^2\rho(z_0)}{\p z^\alpha\partial z^\beta}=:\rho_{\al\al}(z_0)\delta_{\al\beta}
\,\,\mbox{for}\,\,2\leq\al, \beta\leq n-1.\] Then at the point $(z_0, \rho(z_0))\in \Gamma,$ under the normal coordinates chosen above, we have \[
\left\{
\begin{aligned}
\gamma^{11}&=\frac{1}{\pro}\lt(1-\frac{w^2-1}{w(1+w)}\rt)=\frac{1}{\pro w},\\
\gamma^{1\al}&=0,\,\,&2\leq\al\leq n-1,\\
\gamma^{\al\beta}&=\frac{1}{\pro}\delta_{\al\beta},\,\,&2\leq\al, \beta\leq n-1,
\end{aligned}
\right.
\]
\[\left\{
\begin{aligned}
h_{11}&=\frac{1}{w}\lt(-\rho_{11}+\frac{2\dpro}{\pro}\rho_1^2+\pro\dpro\rt),\\
h_{1\al}&=-\frac{1}{w}\rho_{1\al},\,\,&2\leq\al\leq n-1,\\
h_{\al\beta}&=\frac{1}{w}\lt(-\rho_{\al\al}+\pro\dpro\rt)\delta_{\al\beta},\,\,&2\leq\al, \beta\leq n-1,
\end{aligned}
\right.
\]
and
\be\label{second-fundamental-form}
\left\{
\begin{aligned}
a_{11}&=\gamma^{1k}h_{kl}\gamma^{l1}=\gamma^{11}h_{11}\gamma^{11}=\frac{h_{11}}{\prot w^2},\\
a_{1\al}&=\gamma^{1k}h_{kl}\gamma^{l\alpha}=\frac{h_{1\al}}{\prot w},\,\,&2\leq\al\leq n-1,\\
a_{\al\beta}&=\gamma^{\al\al}h_{\al\beta}\gamma^{\beta\beta}=\frac{h_{\al\beta}}{\prot},\,\,&2\leq\al, \beta\leq n-1.
\end{aligned}
\right.
\ee

\subsubsection{Regularized distance and its Hessian}
\label{sub-rdf.2}
Consider the following function $\fb: \mS^{n-1}\times I\goto\R,$
\[\fb:=\frac{\phi(r-\rho(z))}{\phi(\rho(z))}.\]
Then we can see that on $\Gamma=\p\Omega=(z, \rho(z))$ we have $\fb=0.$ It is straightforward to verify that $\fb$ is a regularized distance for $\Omega.$

In this subsection, we will compute the Hessian of $\fb$ at the point $(z_0, r)\in\N^n(K)$ under the normal coordinates at $z_0$ chosen in Subsection
\ref{sub-rdf.1}. A straightforward calculation yields
\[\fb_1=-\lt(\frac{\dot\phi(r-\rho)}{\pro}+\frac{\phi(r-\rho)\dpro}{\prot}\rt)\rho_1=-\frac{\pr}{\prot}\rho_1,
\,\,\fb_\al=0\,\,\mbox{for}\,\,2\leq\al\leq n-1,\]
\[\fb_{11}=2\frac{\pr\dpro}{\phi^3(\rho)}\rho^2_1-\frac{\pr}{\prot}\rho_{11},
\,\,\fb_{1\al}=-\frac{\pr}{\prot}\rho_{1\al}\,\,\mbox{for}\,\,2\leq\al\leq n-1,\]
\[\fb_{\al\beta}=-\frac{\pr}{\prot}\rho_{\al\beta}\,\,\mbox{for}\,\,2\leq\al, \beta\leq n-1,\]
and
\[\fb_r=\frac{\dot\phi(r-\rho)}{\pro}, \fb_{rr}=-K\fb. \]

Now, let $\tau_{a}=\frac{e_{a}}{\pr}, 1\leq a\leq n-1,$ then $\{\tau_1, \cdots, \tau_{n-1}, \tau_r\}$ forms an orthonormal frame at $(z_0,r)$. Combining \eqref{hess1.1}, \eqref{hess1.2}, and \eqref{hess1.3} with \eqref{second-fundamental-form} we get, at the point $(z_0, r)$
\begin{eqnarray}
\bn^2_{11}\fb&=&\bn^2\fb(\tau_1,\tau_1)=\frac{1}{\prt}\fb_{11}+\frac{\dpr}{\pr}\fb_r=\frac{w^3}{\pr}a_{11}-K\fb, \label{cs2.1}\\
\bn^2_{1\al}\fb&=&\bn^2\fb(\tau_1,\tau_{\al})=\frac{1}{\prt}\fb_{1\al}=\frac{w^2}{\pr}a_{1\al}, \label{cs2.2}\\
\bn^2_{\al\beta}\fb&=&\bn^2\fb(\tau_{\al},\tau_{\beta})=\frac{1}{\prt}\fb_{\al\beta}+\frac{\dpr}{\pr}\fb_r\delta_{\al\beta}
=\frac{w}{\pr}a_{\al\beta}-K\fb\delta_{\al\beta}, \label{cs2.3}\\
\bn^2_{rr}\fb&=&\bn^2\fb(\tau_r, \tau_r)=-Kb, \label{cs2.4}\\
\bn^2_{ij}\fb&=&\bn^2\fb(\tau_i,\tau_j)=0,\,\, \mbox{for all other cases,}\nonumber
\end{eqnarray}
where $2\leq\al, \beta\leq n-1.$
Therefore, at $(z_0, r)$ under the normal coordinates chosen in Subsection
\ref{sub-rdf.1} we obtain
\be\label{hessian-b}
\begin{aligned}
\text{Hessian}(\fb)&=\left[\ju{ccccc}{\frac{w^3}{\pr}a_{11}&\frac{w^2}{\pr}a_{12}&\cdots&\frac{w^2}{\pr}a_{1n-1}&0\\
\frac{w^2}{\pr}a_{12}&\frac{w}{\pr}a_{22}&\cdots&0&0\\
\vdots&\vdots&\ddots&\vdots\\
\frac{w^2}{\pr}a_{1n-1}&0&\cdots&\frac{w}{\pr}a_{n-1n-1}&0\\
0&0&\cdots&0&0}\right]-K\fb I_n.
\end{aligned}
\ee
where $I_n$ is the $n\times n$ identity matrix.
We want to emphasize that $\kappa[a_{ij}]=(\kappa_1, \cdots, \kappa_{n-1})$ are the principal curvatures of $\Gamma$ at $(z_0, \rho(z_0)).$

In general, consider the function $\psi=\psi(\mathfrak b),$ where $\psi$ is a function defined on $\R.$
We will compute the Hessian of $\psi$ at $(z_0, r).$ Denote $\psi'|_{(z_0, r)}=\frac{d\psi}{d\fb}|_{(z_0, r)}=:A,$ $\psi''|_{(z_0, r)}=\frac{d^2\psi}{d\fb^2}|_{(z_0, r)}=:B,$ and $\tau_n:=\tau_r.$ We obtain for
$1\leq i,j\leq n$,
$$\bn^2_{ij}\psi=A\bn^2_{ij}\fb+B(\tau_i\fb)(\tau_j\fb).$$
From \eqref{hessian-b}, it is easy to see that at $(z_0, r),$ under the specific coordinates we chose earlier, the hessian of $\psi$ is
 \be\label{hessian-psi}
\begin{aligned}
\text{Hessian}(\psi)&=-AK\fb I_n+\left[\ju{ccccc}{\frac{Aw^3}{\pr}a_{11}+B\frac{\rho_1^2}{\phi^4(\rho)}&\frac{Aw^2}{\pr}a_{12}&\cdots&\frac{Aw^2}{\pr}a_{1n-1}&
-B\frac{\dot\phi(r-\rho)\rho_1}{\phi^3(\rho)}\\
\frac{Aw^2}{\pr}a_{12}&\frac{Aw}{\pr}a_{22}&\cdots&0&0\\
\vdots&\vdots&\ddots&\vdots\\
\frac{Aw^2}{\pr}a_{1n-1}&0&\cdots&\frac{Aw}{\pr}a_{n-1n-1}&0\\
-B\frac{\dot\phi(r-\rho)\rho_1}{\phi^3(\rho)}&0&\cdots&0&B\frac{\dot\phi^2(r-\rho)}{\prot}}\right]\\
&=: -AK\fb I_n+M
\end{aligned}
\ee

\section {Application}
\label{app}
We will describe some applications of the regularized distance $\fb.$ More precisely, we will use $\fb$ to construct barrier functions and establish
$C^{1, 1}$ estimates for the admissible solution of the following equation:
\be\label{app-main}
\left\{\begin{aligned}
F\lt(\bn^2u, u\rt):=\s_k^{\frac{1}{k}}\lt(\lambda\lt(\bn^2u+Kug^\N\rt)\rt)&=0\,\,&\mbox{in}\,\,\mathcal{M}\\
u&=\frac{K+1}{2}\,\,&\mbox{on}\,\,\p\Omega_0\\
u&=\frac{K-1}{2}\,\,&\mbox{on}\,\,\p\Omega_1,
\end{aligned}\right.
\ee
where $\mathcal M=\Omega_0\setminus\bar\Omega_1,$ $\p\Omega_i=\{(z, \rho^i(z)): z\in\mS^{n-1}\}\subset\N^n(K)$ for $i=0, 1,$ and $\rho^0(z)>\rho^1(z)$ for all $z\in\mS^{n-1}.$

\begin{proposition}
\label{app-prop1}
Let $\mathcal M$ be the domain given above. We have the following conclusions.\\

 (1). When $\N^n=\R^n,$ $\p\Omega_i$ are strictly $(k-1)$-convex, then there is a subsolution $\lu$ of \eqref{app-main} such that $-\frac{1}{2}\leq\lu\leq \frac{1}{2},$
 $\lu=\frac{1}{2}$ on $\p\Omega_0,$ $\lu=-\frac{1}{2}$ on $\p\Omega_1,$ and $\s_k^{\frac{1}{k}}(\bn^2 \lu)>0$ in $\mathcal M.$ Moreover, $\lu$ satisfies
 $\bn_\nu\lu>0$ on $\p\mathcal M,$ where $\nu$ is the unit normal to $\p\mathcal M$ that points away from the origin.\\

 (2). When $\N^n=\mH^n,$ $\p\Omega_i$ are strictly $k$-convex and $\max\limits_{z\in\mS^{n-1}}(\rho^0(z)-\rho^1(z))<\tanh^{-1}\lt(\frac{1}{\sqrt{n-2}}\rt)$, then there is a subsolution $\lu$ of \eqref{app-main} such that $-1\leq\lu\leq 0,$ $\lu=0$ on $\p\Omega_0,$ $\lu=-1$ on $\p\Omega_1,$ and $\s_k^{\frac{1}{k}}(\bn^2 \lu-\lu g^\N)>0$ in $\mathcal M.$ Moreover, $\lu$ satisfies
 $\bn_\nu\lu>0$ on $\p\mathcal M,$ where $\nu$ is the unit normal to $\p\mathcal M$ that points away from the origin.\\

 (3). When $\N^n=\mS^n,$ $\p\Omega_i$ are strictly $k$-convex, $\mathcal M\ssubset\mS^n_+$ and $\max\limits_{z\in\mS^{n-1}}\frac{\sin\lt(\rho^0(z)-\rho^1(z)\rt)}{\sin\lt(\rho^1(z)\rt)}<\gamma$ for some $\gamma=\gamma(\p\Omega_1, \p\Omega_0)>0$ sufficiently small, then there is a subsolution $\lu$ of \eqref{app-main} such that $0\leq\lu\leq 1,$ $\lu=1$ on $\p\Omega_0,$ $\lu=0$ on $\p\Omega_1,$ and $\s_k^{\frac{1}{k}}(\bn^2 \lu+\lu g^\N)>0$ in $\mathcal M.$ Moreover, $\lu$ satisfies
 $\bn_\nu\lu>0$ on $\p\mathcal M,$ where $\nu$ is the unit normal to $\p\mathcal M$ that points away from the origin.\\
\end{proposition}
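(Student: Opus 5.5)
Write $\fb^i:=\phi(r-\rho^i)/\phi(\rho^i)$ for the regularized distances of $\Omega_i$ from Theorem \ref{thm1}. I will look for the subsolution as a combination $\lu=\psi_0(\fb^0)+\psi_1(\fb^1)+c$, where each $\psi_i$ is monotone in one variable and the constant $c$ is fixed by the boundary values. Near $\p\Omega_0$ we have $\fb^0\le0$, and near $\p\Omega_1$ we have $\fb^1\ge0$. Both $\fb^i$ are increasing in $r$, so $\bn_\nu\lu>0$ will follow from $\psi_i'>0$. The reason is that $\fb_r=\dot\phi(r-\rho)/\phi(\rho)>0$ and, for a radial graph, the outward normal satisfies $\langle\nu,\tau_r\rangle=1/w>0$.

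\textbf{Euclidean case, $K=0$.} I take the simpler ansatz $\lu=\frac12-A(1-e^{-\lambda\fb^0})$-type corrections, or more concretely a convex combination such as
\[
\lu=\tfrac12\,\frac{e^{\lambda\fb^1}-e^{\lambda\fb^1|_{\p\Omega_0}}}{\cdots}.
\]
This form is awkward because $\fb^1$ is not constant on $\p\Omega_0$. Instead I set
\[
\lu=t\,\psi(\fb^0)+(1-t)\,\psi(\fb^1),
\]
with $\psi$ convex and increasing, and afterwards adjust so that $\lu=\frac12$ on $\p\Omega_0$ and $\lu=-\frac12$ on $\p\Omega_1$. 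A cleaner route is to use $\fb^0$ alone, with $\lu=\frac12+g(\fb^0)$ and $g(0)=0$. On $\p\Omega_1$, however, $\fb^0$ varies with $z$. So I use the product/ratio structure: since $\phi(r)=r$, we have $\fb^i=r/\rho^i-1$, and in $\R^n$ the level sets of $\fb^i$ are dilations of $\p\Omega_i$. The key quantity is then the Hessian from \eqref{hessian-psi} with $K=0$. Its upper-left $(n-1)$ block is $\frac{A}{r}\,\mathrm{diag}(w^3,w,\dots,w)$-weighted $a_{ij}$. After conjugating by $\mathrm{diag}(w,1,\dots,1)$, this block has the same $\sigma_{k-1}$-positivity as $(a_{ij})$, which lies in $\Sigma_{k-1}$ by assumption. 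The remaining rank-one part $B\,\bn\fb\otimes\bn\fb$ contributes a positive radial eigendirection when $B>0$ is large relative to $A$. Choosing $\psi=e^{\lambda\fb}$ with $\lambda$ large, the expansion
\[
\sigma_k(M)\ \ge\ B|\bn\fb|^2\,\sigma_{k-1}(\text{tangential block})-C A^k
\]
is positive, because $\sigma_{k-1}$ of the tangential block is at least $cA^{k-1}$ and $B=\lambda A$. This handles a neighborhood of each boundary. To glue, I take $\lu=\max$ or a smooth max of $-\frac12+\psi_1(\fb^1)$ and $\frac12-\psi_0(-\fb^0)$-type functions. Each is a strict subsolution near its own boundary, and the maximum of subsolutions is again a subsolution in the viscosity sense, which can then be smoothed.

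\textbf{Hyperbolic case, $K=-1$.} The relevant matrix is $\bn^2\lu-\lu g$. By \eqref{hessian-psi} this equals $M+A\fb I_n-\psi I_n$. I choose $\psi$ with $\psi-A\fb\le0$, for example $\psi$ linear-plus-exponential on the range $\fb\in[\fb_{\min},0]$. Then the extra diagonal term is nonnegative, and the tangential block now involves the full $k$-convexity of $(a_{ij})$, which is why strict $k$-convexity is assumed.

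The main obstacle is controlling the off-diagonal term $-B\dot\phi(r-\rho)\rho_1/\phi^3$, together with the fact that $\fb$ is not constant on the opposite boundary, which forces the width bound. Explicitly, $\fb^0$ on $\p\Omega_1$ equals $\sinh(\rho^1-\rho^0)/\sinh\rho^0$. Keeping $w$-weighted terms dominated requires $\tanh(\rho^0-\rho^1)<1/\sqrt{n-2}$. I expect to derive this by a Cauchy--Schwarz estimate on the $(1,n)$ entry, where $\tanh$ arises as the ratio $\phi/\dot\phi$ and the factor $n-2$ comes from the $\alpha$ directions. I would check this inequality carefully first.
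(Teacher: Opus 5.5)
Your overall architecture (an exponential of the regularized distance with a large rate for each boundary component, followed by a smooth maximum) is the same as the paper's, which uses $\lbg$, $\lbf$ and Lemma \ref{glue-lem}. The central estimate, however, rests on a false step. Congruence by $D=\mathrm{diag}(w,1,\dots,1)$ does not preserve $\Sigma_{k-1}$ when $k<n$. In fact $\s_j\big(D(a_{pq})D\big)=w^2\s_j(a_{pq})-(w^2-1)\s_j(a_{\al\beta})$, and, for example, $(a_{pq})=\mathrm{diag}(-1,2)$ with $w^2>2$ has $\s_1(a_{pq})>0$ but $\s_1\big(D(a_{pq})D\big)<0$. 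So the claim that $\s_{k-1}$ of the tangential block is at least $cA^{k-1}$ is unjustified. Nor is $B\,\bn\fb\otimes\bn\fb$ a radial direction, since $\bn\fb$ has $\tau_1$-component $-\rho_1/\phi^2(\rho)$. What actually makes the argument work is an exact cancellation. Set $c=Aw/\phi(r)$, and use that principal minors are affine under rank-one perturbations together with $\dot\phi^2=1-K\phi^2$. Then
\[
\s_k(M)=\s_k\big(cD(a_{pq})D\big)+Bc^{k-1}\Big[\frac{w^2\dot\phi^2(r-\rho)}{\phi^2(\rho)}\s_{k-1}(a_{pq})+K(w^2-1)\fb^2\s_{k-1}(a_{\al\beta})\Big],
\]
which is \eqref{sjmr}--\eqref{sjmh}. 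For $K=0$ the $\s_{k-1}(a_{\al\beta})$-terms cancel exactly. This identity, not positivity of the tangential block, is why strict $(k-1)$-convexity suffices in $\R^n$. For the same reason the $(1,n)$ entry must be handled exactly rather than by Cauchy--Schwarz: $m_{1n}^2$ cancels the $B^2$ terms coming from $m_{11}m_{nn}$, which would otherwise dominate.

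For $K=-1$ the residual $-Bc^{k-1}(w^2-1)\fb^2\s_{k-1}(a_{\al\beta})$ is of the same order as the good term. The paper controls it with $0<\s_{j-1}(a_{\al\beta})=\s_j^{11}(a_{pq})<(n-j)\s_{j-1}(a_{pq})$ for $j\le k$; this is exactly where $\kappa\in\Sigma_k$ is used. Positivity then reduces to $\cosh^2(r-\rho^1)-(n-j)\sinh^2(r-\rho^1)>0$ (and its analogue for $\lbf$), whose worst case $j=2$ gives the $\tanh^{-1}(1/\sqrt{n-2})$ bound. This must hold throughout $\mathcal M$, not merely near $\p\Omega_1$, where $r-\rho^1$ is small and the condition is automatic.

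That exposes your second gap. A maximum of two functions that are subsolutions only near their own boundary components need not be a subsolution in between, since a priori either piece may be the active one anywhere in the ring. The paper handles this in three steps:
\begin{itemize}
\item It proves that $\lbg$ and $\lbf$ are subsolutions on all of $\mathcal M$.
\item It fixes $C_1,C_2$ so that $\lbg<\frac{K+1}{2}$ on $\p\Omega_0$ and $\lbf<\frac{K-1}{2}$ on $\p\Omega_1$. Then $\lu$ has the right boundary values, coincides with $\lbf$ and $\lbg$ near the respective components, and inherits $\bn_\nu\lu>0$.
\item It glues with $h$ and the concavity of $F$, absorbing the zeroth-order term in $\mH^n$ into a $\delta/2$ error.
\end{itemize}
So the width condition comes from this global requirement, not from $\fb^0$ being nonconstant on $\p\Omega_1$. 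In addition, your hyperbolic inequality is left unverified, and part (3) is not addressed at all.
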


\begin{proposition}
\label{app-prop2}
Let $\mathcal M$ be the domain given above. We have the following conclusions.\\

 (1). When $\N^n=\R^n\,\, \mbox{or}\,\,\mH^n,$ there is an upper barrier $\bar u$ of the admissible solution $u$ of \eqref{app-main} which satisfies
 \[\bar{\Laplace}\bar u+nK\bar u<0\,\,\mbox{in}\,\,\mathcal M,\]
 $\bar{u}=\frac{K+1}{2}$ on $\p\Omega_0,$ and $\bar{u}=\frac{K-1}{2}$ on $\p\Omega_1.$ Moreover, $\bar u$ satisfies
 $\bn_\nu\bar u>0$ on $\p\mathcal M,$ where $\nu$ is the unit normal to $\p\mathcal M$ that points away from the origin. \\

(2). When $\N^n=\mS^n,$ suppose $\mathcal M\ssubset\mS^n_+$ and
 $\max\limits_{z\in\mS^{n-1}}\frac{\sin\lt(\rho^0(z)-\rho^1(z)\rt)}{\sin\lt(\rho^1(z)\rt)}<\gamma$ for some $\gamma=\gamma(\p\Omega_1, \p\Omega_0)>0$ sufficiently small. Then there is an upper barrier $\bar u$ of the admissible solution $u$ of $\bar u$ of \eqref{app-main} which satisfies
 \[\bar{\Laplace}\bar u+n\bar u<0\,\,\mbox{in}\,\,\mathcal M,\]
 $\bar{u}=1$ on $\p\Omega_0,$ and $\bar{u}=0$ on $\p\Omega_1.$ Moreover, $\bar u$ satisfies
 $\bn_\nu\bar u>0$ on $\p\mathcal M,$ where $\nu$ is the unit normal to $\p\mathcal M$ that points away from the origin.
\end{proposition}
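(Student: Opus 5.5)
The upper barrier should be a concave function of the regularized distances, with the Laplacian controlled through the trace of \eqref{hessian-psi}. I treat $\R^n$ and $\mH^n$ together. Let $\fb^i=\phi(r-\rho^i(z))/\phi(\rho^i(z))$ be the regularized distance of Theorem \ref{thm1} for $\Omega_i$.

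The key point is that the upper barrier need not see the convexity of the boundaries. It only has to be superharmonic for the operator $\bar\Laplace+nK$ and match the boundary data. This is because any admissible $u$ satisfies $\bar\Laplace u+nKu=\s_1(\la(\bn^2u+Kug^\N))\ge 0$, so the comparison principle for the linear operator $L=\bar\Laplace+nK$ gives $u\le\bar u$. For $K=-1$ the zeroth-order coefficient has the right sign. For $K=0$ this is just the maximum principle.

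Rather than using the two distances separately, I would work along each ray. For fixed $z$, put
\[t=t(z,r)=\frac{\phi(r-\rho^1(z))}{\phi(\rho^0(z)-\rho^1(z))}\cdot\frac{\phi(\rho^1(z))}{\phi(\rho^1(z))}.\]
Equivalently, $t=\fb^1/c(z)$ with $c(z)=\phi(\rho^0-\rho^1)/\phi(\rho^1)$. Then $t=0$ on $\p\Omega_1$ and $t=1$ on $\p\Omega_0$. A simpler choice is to take $\bar u=\frac{K-1}{2}+\psi(\fb^1)$ plus a correction. I think the cleanest ansatz is
\[\bar u=\frac{K-1}{2}+1-\Lambda\bigl(e^{-N\fb^0}-e^{-N\fb^1}\bigr)^{\!\!}\cdots\]
That is awkward, however, so I fall back on a more robust construction: $\bar u$ equals the solution of the linear Dirichlet problem $L\bar u=-1$ in $\mathcal M$ with the prescribed boundary values. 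This exists and is smooth since $L$ has nonpositive zeroth-order term and $\mathcal M$ is smooth, and it gives $L\bar u<0$ directly. The regularized distances enter only in verifying the boundary gradient sign.

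For $\bn_\nu\bar u>0$ on $\p\mathcal M$, I compare with explicit functions built from $\fb^i$ near each boundary component. On $\p\Omega_1$, consider $h=\frac{K-1}{2}+\mu\bigl(1-e^{-N\fb^1}\bigr)$. Formula \eqref{hessian-psi} with $A=\mu Ne^{-N\fb^1}$ and $B=-\mu N^2e^{-N\fb^1}$ shows that the $B$-term in the trace,
\[B\Bigl(\frac{\rho_1^2}{\phi^4(\rho)}+\frac{\dot\phi^2(r-\rho)}{\phi^2(\rho)}\Bigr),\]
is negative and dominates the $A$-terms for large $N$ in a thin collar $\{0<\fb^1<\delta\}$. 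This holds because $|\nb\fb^1|$ is bounded below there and the $a_{ij}$ are bounded. Hence $Lh<0$ in the collar. Choose $\mu$ so that $h\ge\bar u$ on the outer edge of the collar, using that $\bar u$ is bounded by the maximum principle. Then $\bar u\le h$ in the collar, with equality on $\p\Omega_1$. This gives $\bn_\nu\bar u\le\bn_\nu h$, which is the wrong direction for a lower bound, so I need the reverse comparison instead: a function $h'\le\bar u$ with $Lh'\ge L\bar u$. Taking $h'=\frac{K-1}{2}+\mu'(e^{N\fb^1}-1)$ with small $\mu'$ gives convexity in the normal direction, so $Lh'\ge -1$ in the collar for $\mu'$ small. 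Its boundary value on the inner edge of the collar is below $\bar u$, since the strong maximum principle gives $\bar u>\frac{K-1}{2}$ inside. Hence $h'\le\bar u$ in the collar, and $\bn_\nu\bar u\ge\bn_\nu h'=\mu'N\bn_\nu\fb^1>0$. The boundary $\p\Omega_0$ is symmetric, using $\frac{K+1}{2}-\mu'(e^{-N\fb^0}-1)$ and the fact that $\bar u<\frac{K+1}{2}$ inside.

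The main obstacle is ensuring the interior strict inequalities $\frac{K-1}{2}<\bar u<\frac{K+1}{2}$ when $K=-1$. There $L$ has a zeroth-order term, so constants are not solutions: $L(\text{const})=-n\cdot\text{const}$. I would check this by writing $\bar u=v-1$ with $v\in[0,\,\cdot\,]$ and applying the maximum principle to $v$ and $-v$, using $Lv=-1-n\le 0$. Tracking signs here is the delicate step.
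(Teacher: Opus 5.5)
The gap is at $\p\Omega_0$. Your $\bar u$ solves $L\bar u=-1$ with $L=\bar\Laplace+nK$, so it is only a supersolution. The maximum principle therefore bounds it from below, as in your $v=\bar u+1$ argument, but never from above. The asserted inequality $\bar u<\frac{K+1}{2}$ in $\mathcal M$ is unsupported.

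The ``symmetric'' barrier does not rescue this. To get $H:=\frac{K+1}{2}-\mu'\lt(e^{-N\fb^0}-1\rt)\ge\bar u$ in a collar you need $LH\le L\bar u=-1$. For small $\mu'$, however, $LH=O(\mu')$, because $nK\frac{K+1}{2}=0$ for $K\in\{0,-1\}$. Near $\p\Omega_1$ the source $-1$ has the helpful sign, since you only need $Lh'\ge-1$; near $\p\Omega_0$ it works against you.

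In fact the conclusion fails. Take $\Omega_1=\{r<1\}$ and $\Omega_0=\{r<R\}$ in $\R^3$. Then $\bar u=-\frac{r^2}{6}+a+\frac br$ with $b=-\frac{R(R^2+5)}{6(R-1)}$, and
\[\p_r\bar u(R)=-\frac R3+\frac{R^2+5}{6R(R-1)}<0\quad\mbox{for } R=10.\]
So $\bar u$ overshoots $\frac12$ and $\bn_\nu\bar u<0$ on $\p\Omega_0$. This is exactly the property the paper needs later: $u_\nu\ge\bar u_\nu>0$ on $\p\Omega_0$ is what bounds $u_{nn}$ there.

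For $K\le0$ the repair is easy: use a source $-\varepsilon$ instead of $-1$. Let $u_0$ solve $Lu_0=0$ with your boundary data, and let $w$ solve $Lw=-1$ with $w=0$ on $\p\mathcal M$. Because $nK\le0$, the strong maximum principle gives $\frac{K-1}{2}<u_0<\frac{K+1}{2}$ in $\mathcal M$. For the lower bound, note $L(u_0-\frac{K-1}{2})=-nK\frac{K-1}{2}\le0$. Apply Hopf's lemma to $u_0$ at its boundary maximum on $\p\Omega_0$, and to $u_0-\frac{K-1}{2}$ at its boundary minimum on $\p\Omega_1$. This gives $\bn_\nu u_0\ge c_0>0$ on $\p\mathcal M$, so $\bar u=u_0+\varepsilon w$ works once $\varepsilon\|\bn w\|_{L^\infty}<c_0$.

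This softer route genuinely differs from the paper's. The paper writes down explicit barriers $C_3(1-e^{-T_3\fbo})+\frac{K-1}{2}$ and $\frac1{C_4}(1-e^{-T_4\fbz})+\frac{K+1}{2}$, controls their traces via \eqref{s1m}, and glues them with the smoothed minimum of Lemma \ref{glue-lem}. That needs no linear solvability theory and also covers $\mS^n$.

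Your proposal omits part (2) entirely. On $\mS^n$, $L=\bar\Laplace+n$ has a positive zeroth-order term, so solvability and comparison require $\lambda_1(\mathcal M)>n$. Even then the $L$-harmonic function with data $0$ and $1$ can overshoot $1$. In the one-dimensional model $u''+u=0$, the solution $u=\sin(x-a)/\sin(b-a)$ has $u'(b)<0$ once $b-a>\pi/2$. So some smallness hypothesis like the one on $\gamma$ must enter, and your argument never uses one.
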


\begin{remark}
We remark that, when $\N^n=\mS^n,$ in order to apply the maximum principle to control the height of $u,$ one needs the condition $\la_1(\mathcal M)>n,$ where
$\la_1(\mathcal M)$ is the first Dirichlet eigenvalues of the Laplace operator on $\mathcal M.$ Since it is well known that $\la_1(\mS^n_+)=n,$ by assuming
$\mathcal M\ssubset\mS^n_+,$ the desired condition can be easily verified using Theorem 1.3 in \cite{Har24}.
\end{remark}

The existence of a smooth subsolution $\lu$ of \eqref{app-main} and a smooth upper barrier $\bar u$ of the admissible solution of \eqref{app-main} is crucial in the proof of the main theorem. We will prove Proposition
\ref{app-prop1} and \ref{app-prop2} in the next section. Following the idea of \cite{Guan02}, to establish the existence of the solution $u\in C^{1, 1}(\mathcal M)$ of \eqref{app-main}, we consider the following equation with parameter $0\leq t<1,$
\be\label{app-appr}
\left\{\begin{aligned}
F\lt(\bn^2u, u\rt):=\s_k^{\frac{1}{k}}\lt(\lambda\lt(\bn^2u+Kug^\N\rt)\rt)&=(1-t)f^K_0=:f^K_t\,\,&\mbox{in}\,\,\mathcal{M}\\
u&=\frac{K+1}{2}\,\,&\mbox{on}\,\,\p\Omega_0\\
u&=\frac{K-1}{2}\,\,&\mbox{on}\,\,\p\Omega_1,
\end{aligned}\right.
\ee
where  $f_0^K=\s_k^{\frac{1}{k}}\lt(\bn^2\lu+K\lu g^\mathbb N\rt),$
and $\lu$ is as in Proposition \ref{app-prop1}. We will prove that when $\N^n=\R^n$ and $\mH^n,$ equation \eqref{app-appr} admits a smooth admissible solution with a uniform $C^2$ bound. When $\N^n=\mS^n,$ any admissible solution $u^t$ of \eqref{app-appr} satisfying $u^t\geq\lu $ also enjoys a uniform $C^2$ bound.
\begin{theorem}
\label{app-thm1}
Let $\mathcal M$ be a domain that satisfies the conditions of Proposition \ref{app-prop1} and \ref{app-prop2}. For each $0\leq t<1,$ let $u^t$ be an admissible solution of \eqref{app-appr} satisfying $u^t\geq\lu.$ Then we have
\be\label{app-c2}
\|u^t\|_{C^{2}(\mathcal M)}\leq C,
\ee
where $C=C(\mathcal M)>0$ is a positive constant that depends only on $\mathcal M$ (independent of $t$).
\end{theorem}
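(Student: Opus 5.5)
We follow the standard scheme of \cite{Guan02} for Dirichlet problems of Hessian type: a $C^0$ bound, then a global gradient bound reduced to the boundary, then a second derivative bound reduced to the boundary. Throughout, $u=u^t$ and $C$ denotes constants independent of $t$. All boundary information comes from the barriers of Proposition \ref{app-prop1} and Proposition \ref{app-prop2}.

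\textbf{$C^0$ and $C^1$ bounds.} Admissibility gives $\la(\bn^2u+Kug^\N)\in\bar\Sigma_k\subset\bar\Sigma_1$, so $\bar\Laplace u+nKu\ge0$. Since $\bar u$ satisfies $\bar\Laplace\bar u+nK\bar u<0$ with the same boundary values, the maximum principle applied to $\bar\Laplace+nK$ gives $\lu\le u\le\bar u$ in $\mathcal M$. For $K=0,-1$ the zeroth-order coefficient $nK$ is $\le0$, so the maximum principle applies directly. For $K=1$ we use $\la_1(\mathcal M)>n$, as in the remark following Proposition \ref{app-prop2}. Because $\lu=u=\bar u$ on $\p\mathcal M$, we get $|\bn u|\le\max(|\bn\lu|,|\bn\bar u|)$ on $\p\mathcal M$. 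For the interior gradient bound we consider $|\bn u|^2+Au^2$ (or $|\bn u|^2e^{Au}$). We differentiate the equation once and use concavity of $\s_k^{1/k}$ together with the bound $|\nabla f^K_t|\le C$, which holds since $f_0^K$ is a fixed smooth positive function. A maximum occurring in the interior is then controlled by the standard argument for $\s_k$ equations in space forms; the curvature terms of $\N^n$ contribute only terms of order $\sum F^{ii}|\bn u|^2$, which the $Au^2$ term absorbs.

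\textbf{$C^2$ bound.} By concavity, a global bound follows from an upper bound on $\max_{\bar{\mathcal M}}\la_{\max}(\bn^2u)$. In the interior we use the test function $\log\la_{\max}(\bn^2u+Kug^\N)+\frac{A}{2}|\bn u|^2$. Differentiating the equation twice and applying concavity of $F$ reduces the interior maximum to the boundary. This works even though $f_t^K$ degenerates as $t\to1$, because the terms involving $f$ enter only through $\bn^2 f^K_t$, which is bounded uniformly in $t$; this is the key feature of the construction of \cite{Guan02}. On $\p\mathcal M$ we split into three kinds of derivatives. The tangential-tangential derivatives are bounded because $u$ is constant on each boundary component, giving $u_{\al\beta}=-u_\nu h_{\al\beta}$. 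The mixed tangential-normal derivatives are bounded via a barrier of the form $\Psi=A_1(u-\lu)+A_2|x-x_0|^2$-type terms, with $\lu$ playing the role of the strict subsolution. The normal-normal derivative is then the remaining term.

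\textbf{Main obstacle.} The hard step is the double normal derivative $u_{\nu\nu}$, because the equation is degenerate. We use $\bn_\nu\lu>0$ and $u\ge\lu$ with equality on $\p\mathcal M$; these give $u_\nu\ge\lu_\nu\ge c_0>0$ uniformly. Since $u_{\al\beta}=-u_\nu h_{\al\beta}$ and $\p\mathcal M$ is strictly $(k-1)$-convex in $\R^n$ (or strictly $k$-convex in $\mH^n$, which accounts for the extra $Ku$ term), the tangential block has eigenvalues $\la'$ with $\s_{k-1}(\la')\ge c>0$. This is the point where the curvature hypotheses on $\p\Omega_i$ enter. Expanding $\s_k(\la(\bn^2 u+Ku g^\N))=u_{\nu\nu}\s_{k-1}(\la')+O(1)=(f^K_t)^k\le C$, where the $O(1)$ term uses the mixed derivative bound already obtained, gives $u_{\nu\nu}\le C$. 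The lower bound $u_{\nu\nu}\ge -C$ follows from $\Delta u\ge -nKu$. I expect the main difficulty to be the mixed derivative barrier near $\p\mathcal M$: with $f_t$ degenerate one needs $F^{ij}$-strict subsolution gains from $\lu$ in the form $\mathcal L(\lu-u)\ge\e(1+\sum F^{ii})$. I would try to obtain this from the concavity inequality $F^{ij}(\lu-u)_{ij}\ge F(\lu)-F(u)=tf_0^K$, combined with a small quadratic perturbation of $\lu$, following \cite{Guan02}.
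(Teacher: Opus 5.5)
Your outline follows the paper's overall scheme: $C^0$ bounds from barriers, boundary gradient bounds from $\lu\le u\le\bar u$, and tangential and double-normal boundary bounds via $u_\nu\ge c>0$ and $\s_{k-1}$ of the tangential block. The problem is the step you yourself flag as the main obstacle: the mechanism you propose for it does not work, and this is exactly where the paper needs an extra construction.

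Concavity only gives $\mathfrak L(\lu-u)\ge F(\lu)-F(u)=tf^K_0$. Replacing $\lu$ by $\lu-\e\Phi$ (or any fixed quadratic perturbation) lowers $F$ by an amount of order $\e$, so you can only take $\e\lesssim t$. The resulting inequality $\mathfrak L(u-\lu)\le -c\,t\sum F^{ii}$ forces $A_1\gtrsim 1/t$ in your barrier, so the bound on $\bn_{n\al}u$ degenerates as $t\to0$. This cannot be fixed using $\lu$ alone, because at $t=0$ the function $\lu$ itself solves \eqref{app-appr}.

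The paper's remedy is the local subsolution of Subsection~\ref{subsec-loc}. On a thin collar $\Omega_1^{\td\e}$ of $\p\Omega_1$ it sets $\lul=\frac12a^\e_0\fbo+\frac T2a^\e_0(\fbo)^2+\frac{K-1}{2}$. For $T$ large this satisfies $F(\bn^2\lul+K\lul g^\N)>2\psi^\e_K\ge 2f^K_t$, while $\lul<\lu\le u$ in the collar and $\lul=u$ on $\p\Omega_1$. Since the margin $\psi^\e_K$ does not depend on $t$, $\lul-\theta\Phi$ is still a strict subsolution. This gives $\mathfrak L(u-\lul)<-C_1\sum F^{ii}$ with $C_1$ independent of $t$, and $v=u-\lul$ is what enters $\Psi=A_1v+A_2d_0^2-A_3V$. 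You could alternatively treat $t\in[0,\frac12]$ separately, where $f^K_t\ge\frac12 f^K_0$ and a non-degenerate argument is available, but the proposal does neither.

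There are two further problems.

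First, in the interior $C^2$ estimate your test function $\log\la_{\max}+\frac A2|\bn u|^2$ does not close in $\mH^n$. Commuting derivatives gives $\frac1{\la_1}F^{ii}h_{11ii}\ni K\sum F^{ii}=-\sum F^{ii}$. When no eigenvalue is very negative, your only positive terms are $\frac A2F^{11}\la_1^2$, $AF^{ii}u_i^2$ and $Au^2\sum F^{ii}$, and they cannot absorb this term:
\begin{itemize}
\item $A$ must be small.
\item $u=0$ on $\p\Omega_0$.
\item For $k=2$ and $\la=(\la_1,\e,\dots,\e)$ one has $F^{11}\la_1^2\approx\frac12\la_1f^K_t$ but $\sum F^{ii}\approx\frac{n-1}{2}\la_1/f^K_t$.
\end{itemize}
The uniform bound on $\bn^2f^K_t$ is irrelevant here. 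The paper adds $b\Phi$ with $\bn^2\Phi=\dot\phi g^\N$; this produces $b\dot\phi\sum F^{ii}$, which absorbs this term and all constants via $\sum F^{ii}\ge(C_n^k)^{1/k}$.

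Second, $u\ge\lu$ with equality on $\p\mathcal M$ yields $u_\nu\ge\lu_\nu$ only on $\p\Omega_1$, where $\nu$ points into $\mathcal M$. On $\p\Omega_0$, $\nu$ is exterior and you only get $u_\nu\le\lu_\nu$, which is useless. The positive lower bound there must come from $u\le\bar u$ together with $\bn_\nu\bar u>0$ from Proposition~\ref{app-prop2}. The paper singles this out as essential for bounding $|u_{nn}|$ on $\p\Omega_0$.
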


Note that when $K=0, -1$ we obtain $u^t\geq \lu$ from the standard maximum principle. For our convenience, in the rest of the proof, we will drop the superscript $t.$

\begin{proof}[\textit{Proof of Theorem \ref{app-thm1}}]
{\it $C^0$-estimates.} The lower bound for $u$ comes from the assumption that $u\geq \lu$ while the upper bound for $u$ follows from the maximum principle. More precisely,
$u\leq\bar u$ for all $x\in\mathcal M.$

{\it $C^1$-estimates.} We will employ the same test function as the one used in Section 5 of \cite{Guan99}. Set
\[\omega:=\lt(1+|K|\sup\limits_{\mathcal M}u^2+Ku^2+|\bn u|^2\rt)^{1/2}\]
and
\[v=1+u,\]
 we will consider $$W=\max\limits_{\bar{\mathcal M}}\omega\exp(av^2).$$ Here, $a>0$ is a small constant to be determined later. We assume $W$ is attained at an interior point
 $x_0\in\mathcal M.$ Choose a local orthonormal frame $e_1, \cdots, e_n$ around $x_0$ and differentiate the function $\log\omega+av^2$ at $x_0,$ we have
 \be\label{app-1}
 \frac{\omega_i}{\omega}+2avu_i=0.
 \ee
 We may rotate the coordinates such that at $x_0$
 \[u_1(x_0)=|\bn u(x_0)|\,\,\mbox{and}\,\,u_{\al\beta}(x_0)=u_{\al\al}(x_0)\delta_{\al\beta},\,\,\mbox{for}\,\, 2\leq\al, \beta\leq n.\]
 Denote $h_{ik}:=u_{ik}+Ku\delta_{ik},$ in view of the commutation formula
 $$\bn_i\bn_i\bn_ju=:u_{jii}=u_{iij}+Ku_j\,\,\mbox{for}\,\,i\neq j,$$ we can derive
 $h_{kii}=h_{iik}.$ Recall \eqref{app-1} we see that
 \[2\omega\omega_i=2Kuu_i+2\sum_lu_lu_{li}=2\sum_lu_lh_{li}=-4av\omega^2u_i.\]
 Therefore, for $i\geq 2$ we have
 \[2\sum_lu_lh_{li}=2u_1h_{1i}=0.\] This yields
 $u_{ij}(x_0)$ is diagonalized at $x_0$ and $2u_1h_{11}=-4av\omega^2u_1<0.$
 {We note that, when $k=n$ in \eqref{app-appr}, that is, the PDE we consider is $\s_n^{\frac{1}{n}}\lt(\bn^2u+Kug^N\rt)=(1-t)f^K_0,$
 we obtain a contradiction directly. Therefore, in what follows we always assume $2\leq k\leq n-1.$}

We will denote $F^{ij}:=\frac{\p F}{\p h_{ij}}.$ Applying the second
derivative test at $x_0$ we obtain
\be\label{app-2}
0\geq\sum_iF^{ii}\frac{\omega_{ii}}{\omega}+(2a-4a^2v^2)\sum_iF^{ii}u^2_i+2av\lt(f_t^K-Ku\sum_iF^{ii}\rt).
\ee
A straightforward calculation gives
\[\omega^2_i+\omega\omega_{ii}=\sum_lu_{li}h_{li}+\sum_lu_lh_{lii}
=\sum_l(h_{li}-Ku\delta_{li})h_{li}+\sum_lu_{l}h_{lii}.\]
Therefore,  we have
\[\omega\omega_{ii}=\sum_l(h_{li}-Ku\delta_{li})h_{li}+\sum_lu_{l}h_{lii}-\frac{\lt(\sum_lu_lh_{li}\rt)^2}{\omega^2}
\geq-Kuh_{ii}+u_lh_{lii}.\]
Plugging this into \eqref{app-2} we obtain
\be\label{app-3}
0\geq\frac{-Kuf_{t}^{K}+u_1(f_{t}^{K})_1}{\omega^2}+(2a-4a^2v^2)F^{11}u^2_1+2avf^K_t-2avKu\sum_iF^{ii}.
\ee
By equation (3.10) of \cite{CW01} we know there exists $\theta=\theta(n, k)>0$ such that
$F^{11}\geq\theta\sum\limits_iF^{ii}.$ Choosing $a>0$ so small that $2a-4a^2v^2\geq a$ then \eqref{app-3} implies
\[|\bn u(x_0)|\leq C,\]
for some $C=C(|u|_{C^0}, |f^K_0|_{C^1}, n, k)>0.$ This yields if $W$ is attained in the interior of $\mathcal M,$ then
$W<C.$

On the other hand, if $W$ is attained at $\p\mathcal M,$ since
\[\lu\leq u\leq\bar u\] and
\[u\mid_{\p\mathcal M}=\lu\mid_{\p\mathcal M}=\bar u\mid_{\p\mathcal M},\] we immediately conclude
\be\label{c1-bdry} |\bn u(x)|\leq\max\{|\bn\lu(x)|, |\bn\bar u(x)|\}<C,\ee
for all $x\in\p\mathcal M.$
Therefore, we have $W\leq C$ which yields $\max\limits_{\bar{\mathcal M}}|\bn u(x)|\leq C.$

{\it $C^2$-estimates.} In order to obtain the $C^2$ a priori estimates for $u,$ we need to get the estimates of the second derivatives of
$u$ on $\p\mathcal M$ first.

Let $\xi, \eta$ be some tangential unit vector fields on $\p\mathcal M,$ since $u\equiv\text{constant}$ on each connected component of $\p\mathcal M$
we have
\be\label{add1}\bn_{\xi\eta}u=-\bn_{\nu}uII(\xi, \eta)\,\,\mbox{on}\,\,\p\mathcal M=\p\Omega_0\cup\p\Omega_1,\ee
where $\nu$ is the unit normal on $\p\mathcal M$ that points away from the origin and $II$ denotes the second fundamental form of $\p\mathcal M.$

The second derivative on the outside boundary of $\p\mathcal M,$ namely $\p\Omega_0,$ has more or less been done in \cite{Guan99} (see also \cite{Guan14}).
Therefore, we will concentrate on obtaining the $C^2$ estimates on $\p\Omega_1.$

Let $\e>0$ be a small constant, denote
\[\Omega_1^\e:=\{x\in\mathcal M: \text{dist}(x, \p\Omega_1)<\e\}.\]
 We also denote
\[\psi^\e_K:=\max\limits_{\Omega^\e_1}F(\bn_{ij}\lu+K\lu g^\N).\]
\textbf{Claim:} There exists a local subsolution $\lul$ that satisfies
\[F(\bn_{ij}\lul+K\lul g^\N)>2\psi^\e_K\,\,\mbox{in}\,\,\Omega^{\td\e}_1,
\,\,\lul=\frac{K-1}{2}\,\,\mbox{on}\,\,\p\Omega_1,\]
and
\[\lul<\lu\,\,\mbox{in}\,\, \Omega^{\td\e}_1, \]
where $0<\td\e\leq\e$ and
$\Omega_1^{\td\e}:=\{x\in\mathcal M: \text{dist}(x, \p\Omega_1)<\td\e\}.$
This \textbf{Claim} will be proved in Subsection \ref{subsec-loc}.

We emphasize that the reason for introducing a local subsolution is to derive a $C^2$ estimate that is independent of $t$; this is necessary since the solution to \eqref{app-appr} starts from $\lu$ (when t = 0).

In view of the \textbf{Claim}, there exists some $\theta>0$
small such that
\be\label{app-4}F(\bn_{ij}(\lul-\theta\Phi)+K(\lul-\theta\Phi)g^\N)>\frac{3}{2}\psi^\e_K\,\,\mbox{in}\,\, \Omega^{\td\e}_1,\ee
where $\Phi=\int_0^r\phi(s)ds.$ We note that by \cite{Pet} page 8 we have
\[\bn_{ij}\Phi=\dot\phi g^\N_{ij}.\] We consider the linear operator
$\mathfrak{L}$ defined by
\[\mathfrak Lv=F^{ij}\bn_{ij}v+K^-v\sum\limits_iF^{ii}\,\,\mbox{for}\,\, v\in C^2(\Omega^{\td\e}_1),\]
where $K^-=\min\{K, 0\}.$
In view of \eqref{app-4} we get
\[\mathfrak L(u-(\lul-\theta\Phi))<-\frac{1}{2}\psi^\e_K,\]
which gives
\be\label{app-5}
\mathfrak L(u-\lul)<-C_1\sum\limits_iF^{ii}.
\ee
Here we have used $\sum F^{ii}\geq \s_k^{1/k}(\vec{1})=\lt(C_n^k\rt)^{1/k}.$
We will denote $v:=u-\lul,$ then $v$ satisfies
\[\mathfrak Lv\leq-C_1\sum_iF^{ii}\,\,\mbox{in}\,\,\Omega^{\td\e}_1
\,\,\mbox{and}\,\,v\geq 0\,\,\mbox{on}\,\,\bar\Omega^{\td\e}_1.\]
Differentiating \eqref{app-appr} and applying the commutation formula we get
\[F^{ij}\lt(u_{ijk}+Ku_k\delta_{ij}\rt)=F^{ij}\lt(u_{kij}+Ku_j\delta_{ik}\rt)=(f^K_t)_k.\]
This yields
\be\label{app-6}
\lt|\mathfrak L\bn_ku\rt|\leq C\sum_iF^{ii},
\ee
Now, choose any point $x_0\in\p\Omega_1,$ let $d_0(x)$ be the distance from $x$ to $x_0.$ Set
\[\td B_\delta:=\{x\in\mathcal M: d_0(x)<\delta\}.\]
Since $\bn_{ij}d^2_0(x_0)=2\delta_{ij},$ by choosing $0<\delta<\td\e$ sufficiently small,
we may assume $d_0$ is a smooth function in $\td B_\delta$ and
\[\{\delta_{ij}\}\leq\{\bn_{ij}d^2_0\}\leq 3\{\delta_{ij}\}\,\,\mbox{in}\,\,\td B_\delta.\]
In the following, we will adapt the idea of Ivochkina \cite{Ivo90} (see also \cite{Guan14}) to prove the $C^2$ boundary estimates in the tangential-normal direction.

Let $\xi$ be an arbitrary $C^2$ vector field defined in $\td B_\delta$ and $\eta$ be the parallel transport of the unit inward normal vector field $\nu$ to $\p\Omega_1\cap\p\td B_\delta$ along geodesics that intersect $\p\Omega_1$ orthogonally in
$\td B_\delta.$ We define
\[W:=\lt<\bn u, \xi\rt>\,\,\mbox{and}\,\,V: =\frac{1}{2}\lt|\bn u\rt|^2-\frac{1}{2}\lt<\bn u, \eta\rt>^2.\]
Since $u\equiv \text{Constant}$ on $\p\Omega_1,$ by our choice of $\eta,$ we can see that $V=0$ on $\p\Omega_1\cap\p\td B_\delta.$
We compute the derivatives of $W$ and obtain
\be\label{app-7}
\bn_iW=\lt<\bn_i\bn u, \xi\rt>+\lt<\bn u, \bn_i\xi\rt>
\ee
and
\be\label{app-8}
\bn_j\bn_iW=\xi^k\bn_{jik}u+\bn_j\xi^k\bn_{ik}u+\bn_i\xi^k\bn_{jk}u
+\lt<\bn u, \bn_{ji}\xi\rt>.
\ee
where $\xi^k$ is the $k$-th component of $\xi$.
Now let $B=\{b_{ij}\}$ be an orthogonal matrix that simultaneously diagonalizes $\{F^{ij}\}$ and $\{h_{ij}\}.$ We denote the eigenvalues of $\{F^{ij}\}$ and $\{h_{ij}\}$
by $(f_1, \cdots, f_n)$ and $(\la_1, \cdots, \la_n)$ respectively. Then we have
\[h_{ij}=\sum\limits_l\la_lb_{li}b_{lj}\,\,\mbox{and}\,\, F^{ij}=\sum\limits_lf_lb_{li}b_{lj}.\]
This implies for an arbitrary matrix $A=\{a_{jk}\}$ we get
\[
\begin{aligned}
\sum\limits_{i,j,k}F^{ij}a_{jk}h_{ki}&=\sum_{i, j, k}\lt(\sum_sf_sb_{si}b_{sj}\rt)a_{jk}\lt(\sum_l\la_lb_{lk}b_{li}\rt)\\
&=\sum_{l, j, k}f_l\la_lb_{lj}a_{jk}b_{lk}.
\end{aligned}
\]
Therefore, by virtue of \eqref{app-6} we have
\be\label{app-9}
\lt|\mathfrak L W\rt|<C\lt(\sum f_i|\la_i|+\sum f_i\rt),
\ee
where $\la_i, 1\leq i\leq n,$ are the eigenvalues of the matrix $(\bn_{ij}u+Ku\delta_{ij}).$
Next, we compute the derivatives of $V$ and obtain
\be\label{app-10}
\bn_iV=\bn_k u\bn_{ik}u-\lt<\bn u, \eta\rt>\lt(\eta^k\bn_{ik}u+\bn_ku\bn_i\eta^k\rt)
\ee
and
\be\label{app-11}
\begin{aligned}
\bn_j\bn_iV&=\bn_ku\bn_{jik}u+\lt(\bn_{jk}u-\lt<\bn_j\bn u, \eta\rt>\eta^k\rt)\bn_{ik}u\\
&-\lt<\bn_j\bn u, \eta\rt>\bn_ku\bn_i\eta^k-\lt<\bn u, \bn_j\eta\rt>\lt(\eta^k\bn_{ik}u+\bn_ku\bn_i\eta^k\rt)\\
&-\lt<\bn u, \eta\rt>\lt(\bn_j\eta^k\bn_{ik}u+\eta^k\bn_{jik}u+\bn_{jk}u\bn_i\eta^k+\bn_ku\bn_{ji}\eta^k\rt)
\end{aligned}
\ee
Now, for any point $x\in\td B_\delta,$ we may let $\{e_1, \cdots, e_n\}$ be an orthonormal frame at $x$ with $\eta(x)=e_n.$
By Proposition 2.19 of \cite{Guan14} we have
\[
\begin{aligned}
&F^{ij}\lt(\bn_{jk}u-\lt<\bn_j\bn u, \eta\rt>\eta^k\rt)\bn_{ik}u\\
&=\sum_{l<n}F^{ij}u_{il}u_{jl}\geq c_0\sum_{i\neq r}f_i\lt(\la_i-Ku\rt)^2\\
&>c_0\sum_{i\neq r}f_i\la_i^2-C\lt(\sum f_i|\la_i|+\sum f_i\rt).
\end{aligned}
\]
We conclude that
\be\label{app-12}
\mathcal LV>c_0\sum_{i\neq r}f_i\la_i^2-C\lt(\sum f_i|\la_i|+\sum f_i\rt).
\ee
Below, we assume that the vector field $\xi$ is the tangent vector along $\p\Omega_1\cap\p\td B_\delta.$ Then $W=0$ on $\p\Omega_1\cap\p\td B_{\delta}.$ Following the argument in \cite{Guan14}, let
$$\Psi=A_1v+A_2d^2_0-A_3V.$$
In view of Corollary 2.21 of \cite{Guan14} we have, when $A_1\gg A_2\gg A_3\gg 1$
\[\mathfrak L\lt(\Psi\pm W\rt)\leq 0\,\,\mbox{in}\,\,\td B_\delta\]
and $\Psi\pm W\geq 0$ on $\p\td B_\delta.$ By the maximum principle we derive $\Psi\pm W\geq 0$ in $\td B_\delta$ and therefore
\be\label{app-13}
|\bn_{n\al}u(x_0)|\leq\bn_n\Psi(x_0)\leq C,\,\,\mbox{for}\,\,\forall \al<n.
\ee
To estimate $u_{nn}(x_0),$ we note that on $\p\Omega_1$
\[\s_{k-1}(u_{\al\beta}+Ku\delta_{\al\beta})(u_{nn}+Ku)+\s_k(u_{\al\beta}+Ku\delta_{\al\beta})-\sum_{s=1}^{n-1}u^2_{ns}\s_{k-2}(u_{\al\beta}+Ku\delta_{\al\beta})=\lt(f^K_t\rt)^k,\]
where $1\leq\alpha, \beta\leq n-1.$
Since $u\geq\lu$ in $\mathcal M$ and $u=\lu$ on $\p\mathcal M,$ in view of Proposition \ref{app-prop1} we have $u_\nu\geq\lu_\nu\geq c>0$ on $\p\Omega_1,$ where $\nu$ is the unit normal to $\p\Omega_1$ that points away from the origin. We note that $\lu_\nu>c$ comes from the construction of $\lu,$ which will be given in Subsection \ref{subsec-lu}.
In view of \eqref{add1} and our assumptions on $\p\Omega_1,$ it is easy to see that $\s_{k-1}(u_{\al\beta}+Ku\delta_{\al\beta})>c_0>0,$ this implies
\be\label{c2-nn}|u_{nn}(x_0)|<C.\ee
We want to emphasize that the property $\bar u_\nu>0$ in $\p\mathcal M,$ as stated in Proposition \ref{app-prop2}, is essential for obtaining a uniform bound of $|u_{nn}|$ on $\p\Omega_0.$

Next, we will establish $C^2$ global estimates. Following \cite{Guan99}, set
 \[W=\lt(\bn^2_{\xi\xi} u+Ku\rt)\exp\lt\{\frac{a}{2}\lt(|\bn u|^2+Ku^2\rt)+b\Phi\rt\},\]
 where $|\xi|=1,$ $\xi\in T_x\N^n,$ $\Phi=\int_0^r\phi(s)ds,$ and $a, b>0$ are some constant to be determined. We may assume $W$ achieves its maximum at an interior point $x_0\in\mathcal M$ for some unit vector $\xi\in T_x\N^n.$
 Choose a smooth orthonormal local frame $\{e_1, \cdots, e_n\}$ at $x_0$ such that $e_1(x_0)=\xi$ and $\{\bn^2_{ij}u(x_0)\}$ is diagonal.
 We denote $\la_i:=\bn^2_{ii}u(x_0)+Ku(x_0)=h_{ii}.$ We will always assume $\la_1\geq\cdots\geq \la_n.$ Differentiating $W$ at $x_0$
we obtain
\be\label{app-crit}
0=\frac{h_{11i}}{\la_1}+au_i\la_i+b\Phi_i
\ee
and
\be\label{app-sec}
\begin{aligned}
0&\geq\frac{1}{\la_1}F^{ii}h_{11ii}-F^{ii}\lt(\frac{h_{11i}}{\la_1}\rt)^2+aF^{ii}u_{ii}\la_i\\
&+aF^{ii}u_lu_{lii}+KaF^{ii}u^2_i+bF^{ii}\Phi_{ii}\\
&=\frac{1}{\la_1}F^{ii}h_{11ii}-F^{ii}\lt(\frac{h_{11i}}{\la_1}\rt)^2+aF^{ii}\la^2_i-aKuf^K_t\\
&+aF^{ii}u_l\lt(u_{iil}+Ku_l-Ku_i\delta_{li}\rt)+aKF^{ii}u^2_i+bF^{ii}\Phi_{ii}\\
&\geq\frac{1}{\la_1}F^{ii}h_{11ii}-F^{ii}\lt(\frac{h_{11i}}{\la_1}\rt)^2+aF^{ii}\la^2_i-aKuf^K_t
+au_l(f^K_t)_l+b\dot\phi\sum F^{ii}.
\end{aligned}
\ee
Note that
\[
\begin{aligned}
F^{ii}h_{11ii}&=F^{ii}(u_{11ii}+Ku_{ii})\\
&=F^{ii}[u_{ii11}+2K(u_{11}-u_{ii})+Ku_{ii}]\\
&=F^{ii}(u_{ii11}+Ku_{11})+KF^{ii}(u_{11}-u_{ii})\\
&=(f^K_t)_{11}-Kf^K_t-F^{ij, kl}h_{ij1}h_{kl1}+K\la_1\sum F^{ii}.
\end{aligned}
\]
Plugging this into \eqref{app-sec} we have
\be\label{app-sec1}
\begin{aligned}
0&\geq-\frac{1}{\la_1}F^{ij, kl}h_{ij1}h_{kl1}+K\sum F^{ii}\\
&-F^{ii}\lt(\frac{h_{11i}}{\la_1}\rt)^2+aF^{ii}\la^2_i+b\dot\phi\sum F^{ii}-C,
\end{aligned}
\ee
where $C=C(|u|_{C^1}, |f^K_0|_{C^2})>0.$ Below, we divide the analysis of \eqref{app-sec1} into two cases.

\textbf{Case A.} At $x_0,$ $\la_n\leq-\frac{\la_1}{n}.$ By virtue of \eqref{app-crit} we get
\[\lt(\frac{h^2_{11i}}{\la_1}\rt)^2\leq 2a^2u^2_i\la^2_i+2b^2\Phi^2_i.\] Combining this with the concavity of $F,$
\eqref{app-sec1} becomes
\be\label{app-secA1}
\begin{aligned}
0&\geq K\sum F^{ii}+\lt[a-2a^2|\bn u|^2\rt]\sum F^{ii}\la^2_i\\
&+\lt[b\dot\phi-2b^2|\bn\Phi|^2\rt]\sum F^{ii}-C.
\end{aligned}
\ee
We will choose $a>0$ so small that
\[a-2a^2|\bn u|^2\geq \frac{a}{2}.\]
Since in this case we have
\[\sum F^{ii}\la^2_i\geq F^{nn}\la^2_n\geq\frac{\la^2_1}{n^3}\sum F^{ii},\]
\eqref{app-secA1} gives
\[0\geq\frac{a}{2}\frac{\la_1^2}{n^3}+\lt[b\dot\phi-2b^2|\bn\Phi|^2+K\rt]-C.\]
Therefore, in this case we have
\[\la_1(x_0)<C=C(|u|_{C^1}, |f^K_0|_{C^2}, \mathcal M, n).\]
We note that here $\la_1$ is also determined by $b.$ However, as we will see in the discussion of \textbf{Case B}, $b=b(|u|_{C^1}, |f^K_0|_{C^2}, \mathcal M)>0$ is chosen to be a bounded constant. Thus, $\la_1(x_0)$ is bounded in this case.

\textbf{Case B.} At $x_0,$ $\la_n>-\frac{\la_1}{n}.$ Let us partition $\{1, \cdots, n\}$ into two parts.
\[I=\{j: f_j\leq n^2 f_1\}\,\,\mbox{and}\,\, J=\{j: f_j> n^2f_1\}.\] Then we have
\[
\begin{aligned}
\sum_{i\in I}F^{ii}\lt(\frac{h_{11i}}{\la_1}\rt)^2&\leq 2b^2|\bn\Phi|^2\sum_{i\in I}f_i+2a^2|\bn u|^2\sum_{i\in I}f_i\la^2_i\\
&\leq 2n^3b^2|\bn\Phi|^2f_1+2a^2|\bn u|^2\sum_if_i\la^2_i.
\end{aligned}
\]
Moreover, by the concavity of $F$ we obtain (see (5.38) of \cite{GSS} for example)
\[
\begin{aligned}
-\frac{1}{\la_1}F^{ij, kl}h_{ij1}h_{kl1}&\geq\frac{2}{\la_1}\sum_{i\in J}\frac{f_i-f_1}{\la_1-\la_i}h^2_{11i}\\
&\geq\frac{2}{\la^2_1}\lt(1-\frac{1}{n}\rt)\sum_{i\in J}f_ih^2_{11i}\geq\sum_{i\in J}f_i\lt(\frac{h^2_{11i}}{\la_1}\rt)^2.
\end{aligned}
\]
Therefore, in this case \eqref{app-sec1} gives
\be\label{app-secB1}
\begin{aligned}
0&\geq\lt[a-2a^2|\bn u|^2\rt]f_1\la_1^2-2n^3b^2|\bn\Phi|^2f_1+\lt[b\dot\phi+K-C\rt]\sum F^{ii}\\
&\geq\lt[\frac{a}{2}\la^2_1-2n^3b^2|\nb\Phi|^2\rt]f_1+\lt(b\dot\phi+K-C\rt)\sum F^{ii}.
\end{aligned}
\ee
By choosing $b>C=C(|u|_{C^1}, |f^K_0|_{C^2}, \mathcal M)>0$ large we obtain
\[\la_1(x_0)<C=C(|u|_{C^1}, |f^K_0|_{C^2}, \mathcal M, n).\]
Therefore, $W$ is uniformly bounded.
\end{proof}

The existence of an admissible solution of \eqref{app-appr} follows from standard PDE theory when $\N^n=\R^n$ or $\mH^n,$ where the maximum principle applies.
Theorem \ref{thm2} then follows from the standard convergence argument.

\section {Construction of barriers}
\label{sec-con}
In this section, we will construct barriers and prove Proposition \ref{app-prop1} and \ref{app-prop2}.
Let $M:=\{m_{ij}\}$ be the matrix defined in \eqref{hessian-psi}, we will start with computing $\s_j(M).$
A direct calculation yields
\be\label{s1m}
\begin{aligned}
\s_1(M)&=\frac{Aw^3}{\pr}a_{11}+\frac{Aw}{\pr}\sum\limits_{\al=2}^{n-1}a_{\al\al}+B\frac{\dot\phi^2(r-\rho)}{\prot}+B\frac{\rho_1^2}{\phi^4(\rho)}\\
&=\frac{Aw^3}{\pr}a_{11}+\frac{Aw}{\pr}\sum\limits_{\al=2}^{n-1}a_{\al\al}+B\lt(\frac{w^2}{\prot}-K\fb^2\rt).
\end{aligned}
\ee
 Now, for $1<j\leq n,$ by equation (2.5) of \cite{Tru95} we have,
 \be\label{sjm}
 \begin{aligned}
 \s_j(M)&=m_{11}\s_{j-1}(M|1)+\s_j(M|1)-\sum_{s=2}^nm^2_{1s}\s_{j-2}(M|1s)\\
 &=m_{11}\lt[m_{nn}\s_{j-2}(M|1n)+\s_{j-1}(M|1n)\rt]+\s_j(M|1n)+m_{nn}\s_{j-1}(M|1n)\\
 &-\sum_{s=2}^{n-1}m^2_{1s}\lt[\s_{j-2}(M|1ns)+m_{nn}\s_{j-3}(M|1ns)\rt]-m^2_{1n}\s_{j-2}(M|1n)\\
 &=m_{nn}\lt[m_{11}\s_{j-2}(m_{\al\beta})+\s_{j-1}(m_{\al\beta})-\sum_{s=2}^{n-1}m^2_{1s}\s_{j-3}(m_{\al\beta}|s)\rt]\\
 &+\lt[m_{11}\s_{j-1}(m_{\al\beta})+\s_j(m_{\al\beta})-\sum_{s=2}^{n-1}m^2_{1s}\s_{j-2}(m_{\al\beta}|s)\rt]
 -\frac{B^2\rho^2_1\dot\phi^2(r-\rho)}{\phi^6(\rho)}\s_{j-2}(m_{\al\beta}),
 \end{aligned}
 \ee
 where $(M|i)$ is the matrix obtained by deleting the $i$-th row and $i$-th column of the matrix $M$ and $(m_{\al\beta})=(M|1n).$

More specifically, for $1\leq j\leq n$ we have the follows.

In $\R^n,$
\be\label{sjmr}
\begin{aligned}
\s_j(M)&=w^2\lt(\frac{Aw}{r}\rt)^{j-1}\frac{B}{\rho^2}\s_{j-1}(a_{pq})\\
&+w^2\lt(\frac{Aw}{r}\rt)^{j}\s_j(a_{pq})+\lt(\frac{Aw}{r}\rt)^j\s_j(a_{\al\beta})(1-w^2).
\end{aligned}
\ee

In $\mH^n,$
\be\label{sjmh}
\begin{aligned}
\s_j(M)&=w^2\lt(\frac{Aw}{\sinh r}\rt)^{j-1}\frac{B\cosh^2(r-\rho)}{\sinh^2(\rho)}\s_{j-1}(a_{pq})\\
&-B\fb^2\lt(\frac{Aw}{\sinh r}\rt)^{j-1}\s_{j-1}(a_{\al\beta})(w^2-1)\\
&+w^2\lt(\frac{Aw}{\sinh r}\rt)^{j}\s_j(a_{pq})-\lt(\frac{Aw}{\sinh r}\rt)^j\s_j(a_{\al\beta})(w^2-1).
\end{aligned}
\ee

In $\mS^n,$
\be\label{sjms}
\begin{aligned}
\s_j(M)&=w^2\lt(\frac{Aw}{\sin r}\rt)^{j-1}\frac{B\cos^2(r-\rho)}{\sin^2(\rho)}\s_{j-1}(a_{pq})\\
&+B\fb^2\lt(\frac{Aw}{\sin r}\rt)^{j-1}\s_{j-1}(a_{\al\beta})(w^2-1)\\
&+w^2\lt(\frac{Aw}{\sin r}\rt)^{j}\s_j(a_{pq})-\lt(\frac{Aw}{\sin r}\rt)^j\s_j(a_{\al\beta})(w^2-1).
\end{aligned}
\ee
Here and in what follows, $1\leq p, q\leq n-1$ and $2\leq\al, \beta\leq n-1.$ We adopt the convention that for any square matrix $\Lambda$ we have $\s_0(\Lambda)=1;$ moreover, if $\dim\Lambda<n\times n$ then $\s_n(\Lambda)=0.$

Before we start our construction, we also need the following lemmas.
\begin{lemma}
\label{matrix-lem}
Let $M$ be an $n\times n$ symmetric matrix and $I_n$ be the $n\times n$ identity matrix. Then, for any $c\in\R$ and $1\leq j\leq n$ we have
\[\s_j(M+cI_n)=\sum\limits_{l=0}^jC_{n-l}^{j-l}c^{j-l}\s_l(M).\]
\end{lemma}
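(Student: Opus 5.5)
The plan is to use the eigenvalues directly, which turns the identity into a purely combinatorial statement about elementary symmetric polynomials. Since $M$ is symmetric, it is orthogonally diagonalizable with eigenvalues $\mu_1,\dots,\mu_n$. Then $M+cI_n$ has eigenvalues $\mu_1+c,\dots,\mu_n+c$, because the same orthogonal conjugation diagonalizes both matrices. Since $\s_j$ of a symmetric matrix means $\s_j$ of its eigenvalue vector, it suffices to prove
\[
\s_j(\mu+c\mathbf 1)=\sum_{l=0}^jC_{n-l}^{j-l}c^{j-l}\s_l(\mu),\qquad \mathbf 1=(1,\dots,1),
\]
with the convention $\s_0=1$.

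The cleanest route is through the generating polynomial. We have $\prod_{i=1}^n(1+t\mu_i)=\sum_{l=0}^n\s_l(\mu)t^l$, and we write
\[
\prod_{i=1}^n\bigl(1+t(\mu_i+c)\bigr)=\prod_{i=1}^n\Bigl((1+tc)+t\mu_i\Bigr)=\sum_{l=0}^n\s_l(\mu)\,t^l(1+tc)^{n-l}.
\]
The last equality comes from expanding the product, choosing the term $t\mu_i$ from exactly $l$ of the factors. Next we expand $(1+tc)^{n-l}=\sum_m C_{n-l}^{m}c^mt^m$ and extract the coefficient of $t^j$. With $m=j-l$ this coefficient is $\sum_{l=0}^jC_{n-l}^{j-l}c^{j-l}\s_l(\mu)$. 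On the left-hand side the coefficient of $t^j$ is $\s_j(\mu+c\mathbf 1)$, so the two expressions agree.

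As an alternative, we could argue combinatorially. Expanding each product $\prod_{i\in S}(\mu_i+c)$ over $|S|=j$, a fixed $l$-subset $T$ of monomials $\mu_T$ appears once for every $S\supset T$, and there are $C_{n-l}^{j-l}$ such sets. Either argument works; we would present the generating-function one.

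No step is a real obstacle. The only point needing a remark is that $\s_j(\lambda(\cdot))$ is invariant under orthogonal conjugation. Equivalently, $\s_j(M)$ is the sum of the principal $j\times j$ minors, which is the form used elsewhere in the paper (for instance in \eqref{sjm}). So the eigenvalue reduction is consistent with how $\s_j$ is applied to the non-diagonal matrix $M$ in \eqref{hessian-psi}.
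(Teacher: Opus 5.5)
Your proof is correct, but it is organized differently from the paper's. The paper never diagonalizes. It writes $\s_j(M+cI_n)$ as the sum of the $j\times j$ principal minors of $M+cI_n$ and expands each one as $\det(\td M_j+cI_j)=\sum_{s=0}^j c^s\s_{j-s}(\td M_j)$. It then sums over these minors. The count it leaves implicit, that each $l\times l$ principal minor of $M$ lies in exactly $C_{n-l}^{j-l}$ of the $j$-element index sets, is precisely your ``alternative'' combinatorial argument, applied to principal minors rather than eigenvalues. Your main argument instead passes to eigenvalues and lets the generating function $\prod_i\bigl((1+tc)+t\mu_i\bigr)=\sum_l\s_l(\mu)t^l(1+tc)^{n-l}$ produce the binomial coefficients automatically, so the counting the paper glosses over becomes fully explicit. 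The cost is the diagonalization step, which uses symmetry. The minor-based proof works verbatim for any square matrix and stays in the principal-minor language the paper uses to manipulate $\s_j$ of the non-diagonal matrix $M$ in the barrier computations. You can avoid that cost by running your computation at the matrix level: $\det\bigl(I_n+t(M+cI_n)\bigr)=(1+tc)^n\det\bigl(I_n+\tfrac{t}{1+tc}M\bigr)=\sum_l\s_l(M)\,t^l(1+tc)^{n-l}$. This needs neither symmetry nor eigenvalues, and it combines the bookkeeping advantage of your route with the generality of the paper's.
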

\begin{proof}
Recall that we can express $\s_j(M+cI_n)$ as the sum of all $j\times j$ principal minors of $M+cI_n.$ In particular, the $j\times j$
principal minor has the form
\[\begin{aligned}
\det(\td M_j+cI_j)=\left|\ju{cccc}{m_{l_1l_1}+c&m_{l_1l_2}&\cdots&m_{l_1l_j}\\
m_{l_1l_2}&m_{l_2l_2}+c&\cdots&m_{l_2l_j}\\
\vdots&\vdots&\ddots&\vdots\\
m_{l_1l_j}&m_{l_2l_j}&\cdots&m_{l_jl_j}+c}\right|=\sum_{s=0}^jc^s\s_{j-s}(\td M_j).
\end{aligned}
\]
Summing over all such principal minors yields the desired result.
\end{proof}

\begin{lemma}
\label{glue-lem}(Lemma 3.1 of \cite{Guan02}) For all $\delta>0,$ there is an even function $h(t)\in C^\infty(\mathbb R),$ such that\\

(1) $h(t)\geq |t|$ for all $t\in\R,$ $h(t)=|t|$ for all $|t|\geq \delta;$\\

(2) $|h'(t)|\leq 1$ and $h''(t)\geq 0$ for all $t\in\R$ and $h'(t)\geq 0$ for all $t\geq 0.$
\end{lemma}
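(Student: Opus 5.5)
The plan is to build $h$ by prescribing its second derivative and integrating twice, choosing the constants so that $h$ agrees with $|t|$ outside $(-\delta,\delta)$.

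\emph{Choice of $h''$.} Fix a smooth even function $g\geq 0$ on $\R$ with $\operatorname{supp} g\subset[-\delta,\delta]$ and $\int_{\R}g=2$. For instance, take $g$ to be a suitable multiple of the standard bump $\exp\lt(-1/(\delta^2-t^2)\rt)$ on $(-\delta,\delta)$, extended by zero.

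\emph{First integration.} Set
\[
h'(t):=\int_0^t g(s)\,ds .
\]
Since $g$ is even, $h'$ is odd and smooth. Since $g\geq0$, $h'$ is nondecreasing. For $t\geq\delta$ we have $h'(t)=\frac12\int_{\R}g=1$, and by oddness $h'(t)=-1$ for $t\leq-\delta$. Monotonicity then gives $-1\leq h'\leq 1$ everywhere and $h'(t)\geq h'(0)=0$ for $t\geq0$. Finally $h''=g\geq0$. So property (2) holds once $h$ is defined with this derivative.

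\emph{Second integration.} Define
\[
h(t):=c+\int_0^t h'(s)\,ds,\qquad c:=\int_0^\delta\bigl(1-h'(s)\bigr)\,ds\geq 0 .
\]
Because $h'$ is odd, $h$ is even and smooth. For $t\geq\delta$,
\[
h(t)=c+\int_0^\delta h'+(t-\delta)=\delta+(t-\delta)=t,
\]
so $h=|t|$ for $|t|\geq\delta$ by evenness. For $0\leq t\leq\delta$,
\[
h(t)-t=c-\int_0^t\bigl(1-h'\bigr)=\int_t^\delta\bigl(1-h'(s)\bigr)\,ds\geq0,
\]
using $h'\leq1$. By evenness $h(t)\geq|t|$ on all of $\R$, which is (1).

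There is no real obstacle. The only point needing care is choosing the additive constant $c$ so that $h$ matches $|t|$ exactly at $t=\delta$. The inequality $h\geq|t|$ then follows automatically from $h'\leq 1$.
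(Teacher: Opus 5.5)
Your construction is correct and complete. The three verifications all check out:
\begin{itemize}
\item $h'$ is odd and nondecreasing, with $h'=\pm1$ for $\pm t\ge\delta$, which gives (2).
\item The constant $c$ forces $h(\delta)=\delta$, so evenness gives $h=|t|$ for $|t|\ge\delta$.
\item $h(t)-t=\int_t^\delta(1-h')\ge0$ on $[0,\delta]$ gives $h\ge|t|$.
\end{itemize}

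The paper itself gives no proof of this lemma; it simply quotes Lemma 3.1 of \cite{Guan02}. So your argument supplies a self-contained proof rather than an alternative to one in the text.

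It is worth noticing that your $h$ is exactly the mollification of $|t|$. Set $\rho:=g/2$, which is even, nonnegative, supported in $[-\delta,\delta]$, with total mass $1$, and let $H(t):=\int_{\R}|t-s|\rho(s)\,ds$. Then $H'(t)=\int_{\R}\mathrm{sgn}(t-s)\rho(s)\,ds=2\int_0^t\rho=\int_0^t g$. Also $H(t)=t-\int_{\R}s\rho(s)\,ds=t$ for $t\ge\delta$. Since $H$ and $h$ have the same derivative and agree at $t=\delta$, $H\equiv h$.

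The convolution viewpoint makes each property a one-liner:
\begin{itemize}
\item $|H'|\le\int\rho=1$;
\item $H''=2\rho\ge0$;
\item $H(t)\ge\bigl|\int_{\R}(t-s)\rho(s)\,ds\bigr|=|t|$ by the triangle inequality.
\end{itemize}
Your double-integration route instead checks these by hand and makes the constant $c=h(0)$ explicit. In particular you get $h(0)=c\le\delta$ directly. That is the bound the paper uses tacitly when it estimates $h(0)/2\le\delta/2$ in the proof of Proposition~\ref{app-prop1}; it also follows from the stated properties, since $h$ is nondecreasing on $[0,\infty)$ with $h(\delta)=\delta$.
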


\subsection{Construction of subsolutions}\label{subsec-lu} In this subsection, we will construct a subsolution $\lu$ of \eqref{app-main}.
\begin{lemma}
\label{con-sub-in}
Let $\mathcal M=\Omega_0\setminus\bar\Omega_1,$ $\p\Omega_i=\{(z, \rho^i(z)): z\in\mS^{n-1}\}\subset\N^n(K)$ for $i=0, 1,$
and $\rho^0(z)>\rho^1(z)$ for all $z\in\mS^{n-1}.$ Set
$$\mathbf{\ubar{g}}:=\frac{e^{T_1\fbo}-1}{C_1}+\frac{K-1}{2},\,\,\mbox{for}\,\,\fbo:=\frac{\phi(r-\rho^1)}{\phi(\rho^1)},$$
we have the following conclusions.\\

 (1). When $\N^n=\R^n,$ $\p\Omega_1$ is strictly $(k-1)$-convex, then there exist some positive constants $C_1, T_1$ such that $\lbg<1/2$ on $\p\Omega_0,$
and $\s_j(\bn^2 \lbg)>0$ in $\mathcal M$ for all $1\leq j\leq k.$ Moreover, $\lbg$ satisfies
 $\bn_{\nu}\lbg>0$ on $\p\Omega_1,$ where $\nu$ is the unit normal to $\p\Omega_1$ that points away from the origin.\\

 (2). When $\N^n=\mH^n,$ $\p\Omega_1$ is strictly $k$-convex and $\max\limits_{z\in\mS^{n-1}}(\rho^0(z)-\rho^1(z))<\tanh^{-1}\lt(\frac{1}{\sqrt{n-2}}\rt)$, there exist some positive constants $C_1, T_1$ such that $\lbg<0$ on $\p\Omega_0,$
and $\s_j(\bn^2\lbg-\lbg g^\mH)>0$ in $\mathcal M$ for all $1\leq j\leq k.$ Moreover, $\lbg$ satisfies
 $\bn_{\nu}\lbg>0$ on $\p\Omega_1,$ where $\nu$ is the unit normal to $\p\Omega_1$ that points away from the origin.\\

 (3). When $\N^n=\mS^n,$ $\p\Omega_1$ is strictly $k$-convex, $\mathcal M\ssubset\mS^n_+,$ and
 $\max\limits_{z\in\mS^{n-1}}\frac{\sin\lt(\rho^0(z)-\rho^1(z)\rt)}{\sin\lt(\rho^1(z)\rt)}<\gamma$ for some $\gamma=\gamma(n, k, \p\Omega_1, \p\Omega_0)>0$ sufficiently small, there exist some positive constants $C_1, T_1$ such that $\lbg<1$ on $\p\Omega_0,$
and $\s_j(\bn^2 \lbg)>0$ in $\mathcal M$ for all $1\leq j\leq k.$ Moreover, $\lbg$ satisfies
 $\bn_{\nu}\lbg>0$ on $\p\Omega_1,$ where $\nu$ is the unit normal to $\p\Omega_1$ that points away from the origin.\\
\end{lemma}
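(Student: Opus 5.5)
The plan is to take $\psi(\fb)=\frac{e^{T_1\fb}-1}{C_1}$ with $\fb=\fbo$, so that in \eqref{hessian-psi}
\[
A=\frac{T_1}{C_1}e^{T_1\fbo},\qquad B=\frac{T_1^2}{C_1}e^{T_1\fbo}=T_1A,
\qquad \bn^2\lbg=-AK\fbo I_n+M .
\]
In every case we first factor out $A$. The $\s_j$ of the curvature-type part of $M$ are then computed by \eqref{sjmr}, \eqref{sjmh}, \eqref{sjms}, and the scalar shift is handled by Lemma \ref{matrix-lem}. On $\mathcal M$ we have $\fbo\ge 0$, and $\fbo$ is bounded above by a constant depending on the ring.

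\textbf{Euclidean case.} Here $K=0$ and $\fbo=(r-\rho^1)/\rho^1$, so \eqref{sjmr} gives
\[
\s_j(M)=\Big(\frac{Aw}{r}\Big)^{j-1}\Big[\frac{w^2B}{\rho^2}\s_{j-1}(a_{pq})+\frac{Aw}{r}\big(w^2\s_j(a_{pq})-(w^2-1)\s_j(a_{\al\beta})\big)\Big].
\]
Strict $(k-1)$-convexity gives $\s_{j-1}(a_{pq})\ge c_0>0$ for $j\le k$. The bracketed second term is bounded by $CA$, since the $a_{pq}$ and $w$ are bounded by smoothness of $\p\Omega_1$ and $r\ge\rho^1\ge c$. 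Since $B=T_1A$, choosing $T_1$ large makes the first term dominate, so $\s_j(\bn^2\lbg)>0$ for $1\le j\le k$. We then choose $C_1$ so large that $\lbg=\frac{e^{T_1\fbo}-1}{C_1}-\frac12<\frac12$ on $\p\Omega_0$. For the normal derivative, note that $\bn\fbo\neq0$ on $\p\Omega_1$ and points outward, because $\fbo_r=1/\rho^1>0$; since $A>0$, we get $\bn_\nu\lbg=A\,\bn_\nu\fbo>0$.

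\textbf{Hyperbolic case.} We need $\s_j(\bn^2\lbg-\lbg g)$. Writing $\lbg=\frac{e^{T_1\fbo}-1}{C_1}-1$, we have
\[
\bn^2\lbg-\lbg I=M+A\fbo I+\Big(1-\tfrac{e^{T_1\fbo}-1}{C_1}\Big)I .
\]
Because $\frac{e^{T_1\fbo}-1}{C_1}\le1$ on $\mathcal M$ (after choosing $C_1$ so that $\lbg<0$ on $\p\Omega_0$), all shifts are nonnegative. By Lemma \ref{matrix-lem}, it suffices to show $\s_l(M)>0$ for $l\le j$, since nonnegative multiples of $\s_l(M)$ are then added. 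In \eqref{sjmh} the $B$-terms are
\[
B\Big(\frac{Aw}{\sinh r}\Big)^{j-1}\Big[\frac{w^2\cosh^2(r-\rho)}{\sinh^2\rho}\s_{j-1}(a_{pq})-\fbo^2(w^2-1)\s_{j-1}(a_{\al\beta})\Big].
\]
This is the main obstacle: the bracket must be positive. Since $\fbo=\sinh(r-\rho)/\sinh\rho$, the bracket is
\[
\frac{\cosh^2(r-\rho)}{\sinh^2\rho}\Big[w^2\s_{j-1}(a_{pq})-\tanh^2(r-\rho)(w^2-1)\s_{j-1}(a_{\al\beta})\Big].
\]
I would expand $\s_{j-1}(a_{pq})=a_{11}\s_{j-2}(a_{\al\beta})+\s_{j-1}(a_{\al\beta})-\sum a_{1\al}^2\s_{j-3}(a_{\al\beta}|\al)$. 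The aim is to bound $\s_{j-1}(a_{\al\beta})$ by a multiple of $\s_{j-1}(a_{pq})$ using strict $k$-convexity of $a_{pq}$ (Newton–Maclaurin type inequalities on the subcone $\kappa|1$). The expectation is that the constant that appears is exactly what the hypothesis $\tanh^2(\rho^0-\rho^1)<\frac1{n-2}$ is designed to absorb. Note $r-\rho^1\le\rho^0-\rho^1$ on $\mathcal M$, and the factor $(w^2-1)$ is compensated by the $w^2$ in front. Once the bracket is bounded below by $c_0>0$ uniformly, $T_1$ large again beats the $A$-terms, which are $O(A)$. The boundary derivative is handled as before.

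\textbf{Spherical case.} The sign in \eqref{sjms} of the $B\fbo^2$ term is favorable. Here the shift $-AK\fbo I$ has a negative coefficient, $-A\fbo$. In Lemma \ref{matrix-lem} this contributes terms like $(A\fbo)^{j-l}\s_l(M)$ with $\fbo\le\gamma$. These are controlled by the main $B$-term once $\gamma$ is small relative to the convexity constants, with $\gamma$ depending on $T_1$ only through the ratio $A/B=1/T_1$ kept fixed. The choices of $C_1$ and the normal derivative are as above.
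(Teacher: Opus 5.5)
\textbf{Overall.} Your scheme is the paper's: the same $A$ and $B=T_1A$, the expansions \eqref{sjmr}--\eqref{sjms}, Lemma \ref{matrix-lem} for the scalar shifts, and choosing $T_1$ first, then $C_1$. The Euclidean case is fine as written, because there the $\s_{j-1}(a_{\al\beta})$ terms cancel and $(k-1)$-convexity suffices.

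\textbf{The gap (hyperbolic case).} It is exactly the step you call the main obstacle. You never show that the bracket
\[
w^2\s_{j-1}(a_{pq})-\tanh^2(r-\rho^1)(w^2-1)\s_{j-1}(a_{\al\beta})
\]
is positive; you only expect some constant to match the hypothesis. The tools you suggest do not deliver it. Expanding $\s_{j-1}(a_{pq})$ along the first row gives no sign control on $a_{11}$ or the $a_{1\al}^2$ terms. Newton--Maclaurin relates the $\s_l$ of a single vector, not $\s_{j-1}$ of a submatrix to $\s_{j-1}$ of the whole. Also, $(a_{\al\beta})$ is the principal submatrix of $(a_{pq})$ with row and column $1$ deleted, not ``$\kappa|1$'', since $a_{1\al}\neq 0$ in general.

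\textbf{The missing idea.} Since $\la(a_{pq})\in\Sigma_k$ and $j\le k$, the matrix $\p\s_j/\p a_{pq}$ is positive definite. Its diagonal entries $\s_j^{pp}=\s_{j-1}(a_{pq}|p)$ are therefore positive, and they sum to $(n-j)\s_{j-1}(a_{pq})$. Hence
\[
0<\s_{j-1}(a_{\al\beta})=\s_j^{11}<\sum_{p=1}^{n-1}\s_j^{pp}=(n-j)\,\s_{j-1}(a_{pq}).
\]
Using $w^2-1<w^2$, $n-j\le n-2$ and $r-\rho^1<\rho^0-\rho^1$ along each ray, the bracket is then at least
\[
w^2\s_{j-1}(a_{pq})\bigl[1-(n-2)\tanh^2\bigl(\max_{\mS^{n-1}}(\rho^0-\rho^1)\bigr)\bigr]>0
\]
for $2\le j\le\min(n-1,k)$. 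The cases $j=1$ and $j=n$ are immediate: for $j=1$ the bracket is $(w^2+\sinh^2(r-\rho^1))/\cosh^2(r-\rho^1)>0$, and for $j=n$ one has $\s_{n-1}(a_{\al\beta})=0$. This is precisely where strict $k$-convexity (rather than $(k-1)$-convexity) and the threshold $\tanh^{-1}(1/\sqrt{n-2})$ enter. Without this step the hyperbolic part is unproved.

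\textbf{Spherical case.} The approach matches the paper, with two clarifications.
\begin{itemize}
\item The $B\fbo^2$ term in \eqref{sjms} is favorable only because $\s_{j-1}(a_{\al\beta})>0$. That is the same $\Sigma_k$ fact as above; the sign of the coefficient alone is not enough.
\item Your remark that $\gamma$ depends on $T_1$ through $A/B$ is muddled. The shift terms satisfy $(A\fbo)^{j-l}\s_l(M)\le C\fbo^{j-l}(A^{j-1}B+A^j)$. Their size relative to the main term $cA^{j-1}B$ is therefore governed by $\fbo^{j-l}$ alone. So $\gamma$ can be fixed from constants depending only on the boundaries, and $T_1$ chosen large afterwards, as the paper does.
\end{itemize}
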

\begin{proof}
Since the proofs of part (1), (2), and (3) are similar and part (1) is the simplest, we only present the proofs of part (2) and (3).

In this proof we denote $A:=\frac{T_1e^{T_1\fbo}}{C_1}$ and $B:=\frac{T_1^2e^{T_1\fbo}}{C_1}=T_1A.$

Proof of (2): When $\N^n=\mH^n,$ at any point $x\in\mathcal M,$ under the specific coordinates we chose in Section \ref{sec-rdf},
$$\bn^2\lbg=M+A\fbo I_n,$$
where $M$ is given in \eqref{hessian-psi}. It is easy to see that, there exists $T_1=T_1(\p\Omega_0, \p\Omega_1)>0$ such that
$\s_1(M)>0$.
For $2\leq j\leq k,$ since
\[\s_j(a_{pq})=a_{11}\s_{j-1}(a_{\al\beta})+\s_j(a_{\al\beta})-\sum_{s=2}^{n-1}a_{1s}^2\s_{j-2}(a_{\al\beta}|s)\]
and $\la(a_{pq})\in\Sigma_k,$
we have for $2\leq j\leq k,$
$$\s_j^{11}=\s_{j-1}(a_{\al\beta})>0,$$
which implies $\la(a_{\al\beta})\in\Sigma_{k-1}.$
Moreover,
$$\s_j^{11}=\s_{j-1}(a_{\al\beta})<\sum_{i=1}^{n-1}\s_{j}^{ii}=(n-j)\s_{j-1}(a_{pq}).$$
By virtue of \eqref{sjmh} we can see that when $2\leq j\leq \min(n-1, k),$
\[
\begin{aligned}
\s_j(M)&>\lt(\frac{Aw}{\sinh r}\rt)^{j-1}B\lt[\frac{\cosh^2(r-\rho^1)}{\sinh^2(\rho^1)}\s_{j-1}(a_{pq})w^2
-\frac{\sinh^2(r-\rho^1)}{\sinh^2(\rho^1)}\s_{j-1}(a_{\al\beta})(w^2-1)\rt]-CA^j\\
&>\lt(\frac{Aw}{\sinh r}\rt)^{j-1}B\lt[\frac{\cosh^2(r-\rho^1)-(n-j)\sinh^2(r-\rho^1)}{\sinh^2(\rho^1)}\s_{j-1}(a_{pq})w^2\rt]
-CA^j.
\end{aligned}
\]
When $j=k=n$ we have
\[\s_n(M)=w^2\lt(\frac{Aw}{\sinh r}\rt)^{n-1}B\frac{\cosh^2(r-\rho^1)}{\sinh^2\rho^1}\s_{n-1}(a_{pq}).\]
Therefore, when
\[\max_{\mS^{n-1}}(\rho^0-\rho^1)<\tanh^{-1}\lt(\frac{1}{\sqrt{n-2}}\rt)\] there exists
$T_1=T_1(\p\Omega_1, \p\Omega_0)>0$ such that $\s_j(M)>0$ for all $1\leq j\leq k,$ that is, $\la(M)\in\Sigma_k.$

Now, we fix this $T_1$, then choose $C_1=C_1(T_1, \p\Omega_1, \p\Omega_0)>0$ such that $\lbg<0$ on $\p\Omega_0.$ It is clear that $\lbg<0$ in $\mathcal M.$ Therefore, for all $1\leq j\leq k$ we have
\[\s_j(\bn^2\lbg-\lbg g^\mH)\geq \s_j(M+A\fbo I_n)>\s_j(M)>0,\]
and $\lbg$ is the desired auxiliary function described in part (2).

Proof of (3): When $N=\mS^{n},$ at any point $x\in\mathcal M,$ under the specific coordinates we chose in Section \ref{sec-rdf},
$$\bn^2\lbg=M-A\fbo I_n,$$
where $M$ is given in \eqref{hessian-psi}. It is easy to see that, there exists $T_1=T_1(n, \p\Omega_0, \p\Omega_1)>0$ such that
$\s_1(M-A\fbo I_n)>0$. For all $2\leq j\leq \min(n-1, k),$ in view of Lemma \ref{matrix-lem} and the assumption that $\la(a_{pq})\in\Sigma_k,$ we obtain
\[
\begin{aligned}
\s_j(M-A\fbo I_n)&=\sum_{l=0}^jC^{j-l}_{n-l}(-A\fbo)^{j-l}\s_l(M)\\
&=\s_j(M)+\sum_{l=0}^{j-1}C^{j-l}_{n-l}(-A\fbo)^{j-l}\s_l(M)\\
&>c_jA^{j-1}B-\sum_{l=1}^{j-1}c_lA^{j-1}(\fbo)^{j-l}B-c_0A^j,
\end{aligned}
\]
where $c_i=c_i(n, i, \p\Omega_1, \p\Omega_0)>0,\,\,\mbox{for}\,\,0\leq i\leq j.$
When $j=k=n$ we have
\[\s_n(M)=w^2\lt(\frac{Aw}{\sin r}\rt)^{n-1}B\frac{\cos^2(r-\rho^1)}{\sin^2\rho^1}\s_{n-1}(a_{pq}).\]
Now, let
$\gamma=\gamma(c_0, \cdots, c_j)>0$ so small that $c_j-\sum_{l=1}^{j-1}c_l\gamma^{j-l}>0.$ Then when $\max\limits_{z\in\mS^{n-1}}\frac{\sin\lt(\rho^0(z)-\rho^1(z)\rt)}{\sin\lt(\rho^1(z)\rt)}<\gamma,$ there exists $T_1=T_1(\p\Omega_1, \p\Omega_0)>0$ such that $\s_j(M-A\fbo)>0$ for all $1\leq j\leq k.$

Now, we fix this $T_1$, then choose $C_1=C_1(T_1, \p\Omega_1, \p\Omega_0)>0$ such that $\lbg<1$ on $\p\Omega_0.$ It is clear that
\[\s_j(\bn^2\lbg)=\s_j(M-A\fbo I_n)>0\,\,\mbox{for all}\,\,1\leq j\leq k.\]
Therefore, $\lbg$ is the desired auxiliary function described in part (3).
\end{proof}

\begin{lemma}
\label{con-sub-out}
Let $\mathcal M=\Omega_0\setminus\bar\Omega_1,$ $\p\Omega_i=\{(z, \rho^i(z)): z\in\mS^{n-1}\}\subset\N^n(K)$ for $i=0, 1,$
and $\rho^0(z)>\rho^1(z)$ for all $z\in\mS^{n-1}.$ Set
$$\mathbf{\ubar{f}}:=C_2\lt(e^{T_2\fbz}-1\rt)+\frac{K+1}{2},\,\,\mbox{for}\,\,\fbz:=\frac{\phi(r-\rho^0)}{\phi(\rho^0)},$$
we have the following conclusions.\\

 (1). When $\N^n=\R^n,$ $\p\Omega_0$ is strictly $(k-1)$-convex, then there exist some positive constants $C_2, T_2$ such that $\lbf<-1/2$ on $\p\Omega_1,$
and $\s_j(\bn^2 \lbf)>0$ in $\mathcal M$ for all $1\leq j\leq k.$ Moreover, $\lbf$ satisfies
 $\bn_{\nu}\lbf>0$ on $\p\Omega_0,$ where $\nu$ is the unit normal to $\p\Omega_0$ that points away from the origin.\\

 (2). When $\N^n=\mH^n,$ $\p\Omega_0$ is strictly $k$-convex and $\max\limits_{z\in\mS^{n-1}}(\rho^0(z)-\rho^1(z))<\tanh^{-1}\lt(\frac{1}{\sqrt{n-2}}\rt)$, there exist some positive constants $C_2, T_2$ such that $\lbf<-1$ on $\p\Omega_1,$
and $\s_j(\bn^2 \lbf-\lbf g^{\mH})>0$ in $\mathcal M$ for all $1\leq j\leq k.$ Moreover, $\lbf$ satisfies
 $\bn_{\nu}\lbf>0$ on $\p\Omega_0,$ where $\nu$ is the unit normal to $\p\Omega_0$ that points away from the origin.\\

 (3). When $\N^n=\mS^n,$ $\p\Omega_0$ is strictly $k$-convex, and $\mathcal M\ssubset\mS^n_+,$ there exist some positive constants $C_2, T_2$ such that $\lbf<0$ on $\p\Omega_1,$
and $\s_j(\bn^2 \lbf)>0$ in $\mathcal M$ for all $1\leq j\leq k.$ Moreover, $\lbf$ satisfies
 $\bn_{\nu}\lbf>0$ on $\p\Omega_0,$ where $\nu$ is the unit normal to $\p\Omega_0$ that points away from the origin.\\
\end{lemma}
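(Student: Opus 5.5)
We follow the proof of Lemma \ref{con-sub-in} with $\fbo$ replaced by $\fbz$ and $\rho^1$ by $\rho^0$. Parts (1) and (3) are treated like (2); we describe (2) and point out the differences. Set $A:=C_2T_2e^{T_2\fbz}$ and $B:=C_2T_2^2e^{T_2\fbz}=T_2A$. These are $\psi'$ and $\psi''$ for $\psi(\fb)=C_2(e^{T_2\fb}-1)+\frac{K+1}{2}$.

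By \eqref{hessian-psi}, in the coordinates of Section \ref{sec-rdf} we have $\bn^2\lbf=M-AK\fbz I_n$, where $M$ is built from the principal curvatures $(a_{pq})$ of $\p\Omega_0$. In $\mathcal M$ we have $r<\rho^0$, so $\fbz<0$.

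In $\mH^n$ ($K=-1$) this gives
\[\bn^2\lbf-\lbf g^{\mH}=M+A\fbz I_n-\lbf I_n.\]
The term $A\fbz I_n$ is now negative, unlike the interior case.

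\textbf{Normalization.} Since $\fbz\le0$ in $\mathcal M$, we get $\lbf\le\frac{K+1}{2}=0$. We want $-\lbf$ to beat $A|\fbz|$. Write $s=T_2\fbz\in[-T_2c,0]$. Then
\[-\lbf-A|\fbz|=C_2\bigl(1-e^{s}+se^{s}\bigr).\]
This is $\ge0$, because $1-e^s+se^s\ge0$ for $s\le0$ (the function vanishes at $0$ and has derivative $se^s\le0$). Hence $\s_j(\bn^2\lbf-\lbf g)\ge\s_j(M)$ by monotonicity of $\s_j$ on $\Sigma_k$ under adding positive multiples of $I_n$, as in Lemma \ref{matrix-lem}. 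Here one must first know $\la(M)\in\Sigma_k$. So it suffices to prove $\la(M)\in\Sigma_k$.

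\textbf{Positivity of $\s_j(M)$.} We redo the estimate from Lemma \ref{con-sub-in} using \eqref{sjmh} with $\rho=\rho^0$. Strict $k$-convexity gives $\la(a_{\al\beta})\in\Sigma_{k-1}$ and $\s_{j-1}(a_{\al\beta})<(n-j)\s_{j-1}(a_{pq})$. Therefore
\[\s_j(M)>\Bigl(\tfrac{Aw}{\sinh r}\Bigr)^{j-1}B\,\frac{\cosh^2(r-\rho^0)-(n-j)\sinh^2(r-\rho^0)}{\sinh^2\rho^0}\,w^2\s_{j-1}(a_{pq})-CA^j.\]
The hypothesis $|r-\rho^0|\le\max(\rho^0-\rho^1)<\tanh^{-1}(1/\sqrt{n-2})$ makes the bracket uniformly positive. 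Since $B=T_2A$, choosing $T_2$ large gives $\s_j(M)>0$ for $2\le j\le k$, and also for $j=1$ via \eqref{s1m}. The case $j=k=n$ is the explicit product formula.

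\textbf{Choice of $C_2$.} Fix $T_2$. On $\p\Omega_1$, $\fbz\le-c_0<0$, so $\lbf\le C_2(e^{-T_2c_0}-1)$. Choosing $C_2$ large makes $\lbf<-1$ there; this does not affect the sign conditions, which are homogeneous in $C_2$. On $\p\Omega_0$,
\[\bn_\nu\lbf=A\,\bn_\nu\fbz=A\,\frac{1}{\phi(\rho^0)w}>0,\]
since $\fbz_r=\dot\phi(0)/\phi(\rho^0)$ there and $\nu\cdot\tau_r=1/w$.

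\textbf{Other space forms.} In $\R^n$ the shift term is absent, and \eqref{sjmr} with $(k-1)$-convexity gives the leading term $w^2(Aw/r)^{j-1}(B/\rho^2)\s_{j-1}(a_{pq})>0$ directly. In $\mS^n$ we need $\s_j(M-A\fbz I_n)=\s_j(M+A|\fbz|I_n)$. This is a positive shift, so no smallness condition is needed, consistent with the statement.

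\textbf{Main obstacle.} The delicate step is the hyperbolic sign bookkeeping: showing that the negative shift $A\fbz I_n$ is compensated by $-\lbf$. If the elementary inequality above fails in some variant, the fallback is to absorb the shift into the $CA^j$ error term. That works because $B=T_2A$ dominates for large $T_2$, by Lemma \ref{matrix-lem}.
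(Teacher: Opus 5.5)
Your proof is correct and follows the paper's overall route. In the paper's coordinates you write the relevant matrix as $M$ plus a multiple of $I_n$, and you prove $\la(M)\in\Sigma_k$ from \eqref{sjmr}--\eqref{sjms} by letting $B=T_2A$ dominate the $O(A^j)$ terms. In $\mH^n$ this uses $\max(\rho^0-\rho^1)<\tanh^{-1}(1/\sqrt{n-2})$ to keep $\cosh^2(r-\rho^0)-(n-j)\sinh^2(r-\rho^0)$ uniformly positive. You then fix $T_2$ and enlarge $C_2$, which is harmless by homogeneity.

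The one genuine difference is in part (2). The paper uses only $se^s\ge -e^{-1}$ and concludes $A\fbz-\lbf>-A/T_2$. It then has to show $\la(M-\frac{A}{T_2}I_n)\in\Sigma_k$ through Lemma \ref{matrix-lem}, controlling error terms $c_l(A/T_2)^{j-l}A^{l-1}B$ of alternating sign. You keep the exact identity $A\fbz-\lbf=C_2(1-e^s+se^s)$ and use $s=T_2\fbz\le 0$ to see that this shift is nonnegative. Hence $\s_j(\bn^2\lbf-\lbf g^{\mH})\ge \s_j(M)$ follows at once from Lemma \ref{matrix-lem} once $\la(M)\in\Sigma_k$. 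This is sharper and makes part (2) structurally identical to part (3), where $-A\fbz>0$ already gives a positive shift. Your stated ``fallback'' of absorbing a negative shift into the $CA^j$ error is exactly the paper's argument, so you do not need it.

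One small slip: on $\p\Omega_0$ the gradient is $\bn\fbz=\phi(\rho^0)^{-1}\tau_r-\rho_1\phi(\rho^0)^{-2}\tau_1$, which has an angular component. So $\bn_\nu\fbz=|\bn\fbz|=w/\phi(\rho^0)$, not $1/(\phi(\rho^0)w)$; you dropped the $\tau_1$-contribution. The sign, and therefore $\bn_\nu\lbf=A\,\bn_\nu\fbz>0$, is unaffected.
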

\begin{proof}
Since the proof of this lemma is very similar to the proof of Lemma \ref{con-sub-in}, we only provide a sketch for part (2) and (3) here.

In this proof, we denote $A:=C_2T_2e^{T_2\fbz}$ and $B:=T_2A.$ We also note that $\fbz<0$ in $\mathcal M.$

Proof of (2): When $\N^n=\mH^n,$ at any $x\in\mathcal M,$ under the specific coordinates we chose in Section \ref{sec-rdf},
$$\bn^2\lbf-\lbf g^\mH=M+(A\fbz-\lbf)I_n.$$ Note that
\[
\begin{aligned}
A\fbz-\lbf&=A\fbz-\frac{A}{T_2}+C_2\\
&=-\frac{A}{T_2}+C_2\lt(1+e^{T_2\fbz}T_2\fbz\rt)\\
&\geq-\frac{A}{T_2}+C_2(1-e^{-1})>-\frac{A}{T_2},
\end{aligned}
\]
where we have used $e^xx\geq-e^{-1}$ for $x\in\mathbb R.$
Therefore, we have $$\s_j(M+A\fbz I_n-\lbf I_n)>\s_j\lt(M-\frac{A}{T_2}I_n\rt).$$
Applying Lemma \ref{matrix-lem} and following the calculation in Lemma \ref{con-sub-in} we obtain,
when $\max\limits_{\mathbb S^{n-1}}\lt(\rho^0-\rho^1\rt)<\tanh^{-1}\lt(\frac{1}{\sqrt{n-2}}\rt)$ there exist $T_2=T_2(\p\Omega^0, \p\Omega^1)>0$ such that
\[
\begin{aligned}
&\s_j\lt(M-\frac{A}{T_2} I_n\rt)\\
>&c_jA^{j-1}B-\sum\limits_{l=1}^{j-1}c_l\lt(\frac{A}{T_2}\rt)^{j-l}A^{l-1}B-c_0A^j,
\end{aligned}
\]
where $c_i=c_i(n, i, \p\Omega_0, \p\Omega_1)>0,$ for $0\leq i\leq j.$ The remainder of the proof is exactly the same as that of Lemma \ref{con-sub-in}, so we omit it.

Proof of (3):  When $N=\mS^{n},$ at any $x\in\mathcal M,$ under the specific coordinates we chose in Section \ref{sec-rdf},
$$\bn^2\lbf=M-A\fb^0 I_n.$$
Since $-A\fb^0>0$ in $\mathcal M,$ following the calculation in Lemma \ref{con-sub-in} we obtain
\[\s_j(\bn^2\lbf)>\s_j\lt(M\rt)>c_jA^{j-1}B-c_0A^{j},\]
where $c_i=c_i(\p\Omega_0, \p\Omega_1)>0,$ for $i=0, j.$ This implies that
there exists $T_2=T_2(\p\Omega_1, \p\Omega_0)>0$ such that $\s_j(\bn^2\lbf)>\s_j\lt(M\rt)>0$ for all $1\leq j\leq k.$
The remainder of the proof is exactly the same as that of Lemma \ref{con-sub-in}, so we omit it.
\end{proof}

We now construct a subsolution $\lu$ described in Proposition \ref{app-prop1}.
\begin{proof}[Proof of Proposition \ref{app-prop1}.] We apply Lemma \ref{glue-lem} and set
$$\lu(x):=\frac{\lbf(x)+\lbg(x)}{2}+\frac{h(\lbf(x)-\lbg(x))}{2},$$
where $h$ is the function as in Lemma \ref{glue-lem}, $\lbg$ is constructed in Lemma \ref{con-sub-in}, and $\lbf$ is constructed in Lemma \ref{con-sub-out}.
It is easy to see that $\lu\geq\max(\lbf, \lbg)$ and
\[
\lu(x)=
\left\{\begin{aligned}
&\lbf(x),\,\,\mbox{if}\,\,\lbf(x)-\lbg(x)>\delta\\
&\lbg(x),\,\,\mbox{if}\,\,\lbg(x)-\lbf(x)>\delta.
\end{aligned}\right.
\]
We only need to verify $\lu$ is the desired subsolution of \eqref{app-main} on the set $\mathcal C:=\{x\in\mathcal M: \lt|\lbf(x)-\lbg(x)\rt|\leq \delta\}.$

As in the proof of Lemma 3.2 in \cite{Guan02}, when $\lt|\lbf(x)-\lbg(x)\rt|\leq\delta$ we get
\be\label{con-1}
\bn_{ij}\lu\geq\frac{1+t(x)}{2}\bn_{ij}\lbf+\frac{1-t(x)}{2}\bn_{ij}\lbg
\ee
for $t(x):=h'\lt(\lbf(x)-\lbg(x)\rt)\in[-1, 1].$
Moreover, in view of the concavity of $F$, for any symmetric matrices $A, B$ that satisfy $\la(A), \la(B)\in\Sigma_k$ we have,
\be\label{con-2}F\lt((1-\al)A+\al B\rt)\geq(1-\al)F(A)+\al F(B)\,\,\mbox{for all}\,\,\al\in[0, 1].\ee
Thus, we arrive at following conclusions.

In $\R^n,$ when $\lt|\lbf-\lbg\rt|\leq\delta,$
\[F\lt(\bn_{ij}\lu\rt)\geq\frac{1+t(x)}{2}F\lt(\bn_{ij}\lbf\rt)+\frac{1-t(x)}{2}F\lt(\bn_{ij}\lbg\rt)>0.\]

In $\mH^n,$ by virtue of properties (1), (2) in Lemma \ref{glue-lem}, when $\lt|\lbf-\lbg\rt|\leq\delta$ we have,
\[
\begin{aligned}
\lu&\leq\frac{\lbf+\lbg}{2}+\frac{h(0)}{2}+\frac{t(x)}{2}\lt(\lbf-\lbg\rt)\\
&\leq \frac{1+t(x)}{2}\lbf(x)+\frac{1-t(x)}{2}\lbg(x)+\frac{\delta}{2}.
\end{aligned}
\]
Combining with \eqref{con-1} gives
\[
\bn_{ij}\lu-\lu\delta_{ij}
\geq\frac{1+t(x)}{2}\lt(\bn_{ij}\lbf-\lbf\delta_{ij}\rt)+\frac{1-t(x)}{2}\lt(\bn_{ij}\lbg-\lbg\delta_{ij}\rt)-\frac{\delta}{2}\delta_{ij}.
\]
When $\delta>0$ is sufficiently small, in view of \eqref{con-2} we have
\[F\lt(\bn_{ij}\lu-\lu g^\mH\rt)>0.\]

In $\mS^n,$ by virtue of $\lu\geq 0,$ when $\lt|\lbf-\lbg\rt|\leq\delta$ we have,
\[
\bn_{ij}\lu+\lu\delta_{ij}\geq\bn_{ij}\lu\geq\frac{1+t(x)}{2}\bn_{ij}\lbf+\frac{1-t(x)}{2}\bn_{ij}\lbg.
\]
Same as the case when $\N^n=\R^n$ we have
\[F\lt(\bn_{ij}\lu+\lu g^\mS\rt)\geq F(\bn_{ij}\lu)>0.\]

Therefore, $\lu$ is the desired subsolution described in Proposition \ref{app-prop1}.
\end{proof}

\subsection{Construction of local subsolutions}\label{subsec-loc} In this subsection, we will construct a local subsolution $\lul$ which has been used in the proof of $C^2$ estimates.
Let $\e>0$ be a small constant, denote
\[\Omega_1^\e:=\{x\in\mathcal M: \text{dist}(x, \p\Omega_1)<\e\}.\]
We may assume $\e$ is so small that $\lu=\lbg$ in $\Omega_1^\e.$
We also denote
\[a^\e_0:=\min\limits_{\Omega_1^\e}\frac{\p\lu}{\p\fbo}\,\,\mbox{and}\,\,\psi^\e_K:=\max\limits_{\Omega^\e_1}F(\bn_{ij}\lu+K\lu g^\N).\]
Now consider
\[\lul:=\frac{1}{2}a^\e_0\fbo+\frac{T}{2}a^\e_0(\fbo)^2+\frac{K-1}{2}.\]
Similar to the argument in Lemma \ref{con-sub-in}, we can show that, when $T>0$ is sufficiently large, we have
\[F(\bn_{ij}\lul+K\lul g^\N)>c_ka_0^\e T^{\frac{1}{k}}.\]
Choosing $T\geq\lt(\frac{2\psi^\e_K}{c_ka_0^\e}\rt)^k$ we have,
$$F(\bn_{ij}\lul+K\lul g^\N)>2\psi^\e_K\,\,\mbox{in}\,\,\Omega_1^\e.$$
Now, let $\td\e<\e$ be a small positive constant such that
\[\frac{a^\e_0}{2}+Ta_0^\e\fbo<a_0^\e\,\,\mbox{in}\,\,\Omega_1^{\td\e}.\]
Then, it is easy to see that $\lu>\lul$ in $\Omega^{\td\e}_1.$ This completes the proof of the \textbf{Claim} we made in Section \ref{app}.

\subsection{Construction of upper barriers} In this subsection, we will construct an upper barrier $\bar u$ of the admissible solution $u$ of \eqref{app-main} described in Proposition \ref{app-prop2}.
We note that, for any admissible solution of \eqref{app-main}, we have $\s_1\lt(\bn^2 u+Kug^\N\rt)\geq0.$ This implies our upper barrier only needs to satisfy
$\s_1\lt(\bn^2 u+Kug^\N\rt)\leq0.$ Therefore, to construct an upper barrier, no curvature condition on the boundary is required.
\begin{lemma}
\label{con-super-in}
Let $\mathcal M=\Omega_0\setminus\bar\Omega_1,$ $\p\Omega_i=\{(z, \rho^i(z)): z\in\mS^{n-1}\}\subset\N^n(K)$ for $i=0, 1,$
and $\rho^0(z)>\rho^1(z)$ for all $z\in\mS^{n-1}.$ Set
$$\bbg:=C_3\lt(1-e^{-T_3\fbo}\rt)+\frac{K-1}{2},\,\,\mbox{for}\,\,\fbo:=\frac{\phi(r-\rho^1)}{\phi(\rho^1)},$$
we have the following conclusions.\\

 (1). When $\N^n=\R^n,$ then there exist some positive constants $C_3, T_3,$ such that $\bbg>1/2$ on $\p\Omega_0,$
and $\s_1(\bn^2 \bbg)<0$ in $\mathcal M.$\\

 (2). When $\N^n=\mH^n,$ then there exist some positive constants $C_3, T_3,$ such that $\bbg>0$ on $\p\Omega_0,$
and $\s_1(\bn^2 \bbg-\bbg g^{\mH})<0$ in $\mathcal M.$\\

 (3). When $\N^n=\mS^n,$ $\mathcal M\ssubset\mS^n_+,$ and
 $\max\limits_{z\in\mS^{n-1}}\frac{\sin\lt(\rho^0(z)-\rho^1(z)\rt)}{\sin\lt(\rho^1(z)\rt)}<\gamma$ for some $\gamma=\gamma(\p\Omega_1, \p\Omega_0)>0$ sufficiently small, there exist some positive constants $C_3, T_3,$ such that $\bbg>1$ on $\p\Omega_0,$
and $\s_1(\bn^2 \bbg+\bbg g^{\mS})<0$ in $\mathcal M.$\\
\end{lemma}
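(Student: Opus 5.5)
We take $\psi(\fbo)=C_3(1-e^{-T_3\fbo})+\frac{K-1}{2}$ in \eqref{hessian-psi}. Then
\[
A=\psi'=C_3T_3e^{-T_3\fbo}>0,\qquad B=\psi''=-T_3A<0,
\]
and in the chosen frame $\bn^2\bbg=M-AK\fbo I_n$. Since only $\s_1$ is involved, we need the trace. By \eqref{s1m},
\[
\s_1(\bn^2\bbg)=\frac{Aw^3}{\pr}a_{11}+\frac{Aw}{\pr}\sum_{\al\ge2}a_{\al\al}-T_3A\lt(\frac{w^2}{\phi^2(\rho^1)}-K(\fbo)^2\rt)-nAK\fbo .
\]
Note $\frac{w^2}{\phi^2(\rho^1)}-K(\fbo)^2=\frac{w^2-1}{\phi^2(\rho^1)}+\frac{\dot\phi^2(r-\rho^1)}{\phi^2(\rho^1)}$, which is the sum of the two positive diagonal $B$-terms. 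On $\bar{\mathcal M}$, which is compact and avoids the origin, this quantity is bounded below by a constant $c_0=c_0(\p\Omega_0,\p\Omega_1)>0$. For $\R^n$ and $\mH^n$ this holds because $\dot\phi(r-\rho^1)\ge1$. For $\mS^n$, we use $\mathcal M\ssubset\mS^n_+$ and the smallness of $\sin(\rho^0-\rho^1)/\sin\rho^1$, which keep $\cos(r-\rho^1)$ away from $0$. The remaining terms are $A$ times quantities bounded by $C(\p\Omega_1,\p\Omega_0)$: the curvatures $a_{pq}$ of $\p\Omega_1$, $w$, $1/\phi(r)$, and $\fbo$ are all bounded on $\bar{\mathcal M}$. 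Hence
\[
\s_1(\bn^2\bbg)\le A\,(C-T_3c_0)<0 \quad\text{once } T_3>C/c_0 .
\]

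Next we handle the zeroth-order terms.

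In $\R^n$ ($K=0$) the estimate above is already the claim.

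In $\mH^n$ we need $\s_1(\bn^2\bbg-\bbg g^\mH)=\s_1(\bn^2\bbg)-n\bbg<0$. This needs care, since $\bbg$ can be negative near $\p\Omega_1$, where $\bbg=-1$. The negative part satisfies $-n\bbg\le n$, so we must absorb a constant. To do this, fix $T_3$ with the margin $A(C-T_3c_0)\le -AT_3c_0/2$. Since $A\ge C_3T_3e^{-T_3\max\fbo}$, choosing $C_3$ large makes $AT_3c_0/2>n$. This is consistent with enlarging $C_3$, which also helps the next requirement.

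In $\mS^n$ the term $+n\bbg$ is positive, and $\bbg\le C_3$. Instead of fixing $T_3$ first, we compare against the $B$-term directly: it has size $T_3C_3T_3e^{-T_3\fbo}c_0$. So we need
\[
T_3^2c_0e^{-T_3\max\fbo}\gtrsim T_3C+n,
\]
where $\max_{\mathcal M}\fbo=\max\frac{\sin(\rho^0-\rho^1)}{\sin\rho^1}<\gamma$ on the closure. Choosing $T_3$ large and then $\gamma\le 1/T_3$, so that $e^{-T_3\fbo}\ge e^{-1}$, the left side dominates. This is where the $\gamma$-hypothesis enters.

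Finally, we choose $C_3$ for the boundary values. On $\p\Omega_0$ we have $\fbo\ge c_1>0$, so $1-e^{-T_3\fbo}\ge1-e^{-T_3c_1}>0$. Enlarging $C_3$ (with $T_3$ fixed) gives $\bbg>\frac{K+1}{2}$ there, that is, $>1/2$, $>0$, $>1$ respectively. In $\mS^n$ we must check that enlarging $C_3$ does not break the previous step. It does not, since both sides scale linearly in $C_3$ except the constant $n$, and $\bbg\le C_3$.

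The main obstacle is the spherical case. There the positive term $n\bbg$ and the need for $T_3$ large pull against the decay factor $e^{-T_3\fbo}$. This is resolved exactly by requiring $\fbo$ small on $\mathcal M$, i.e. $\gamma$ small relative to $T_3$. Uniform positivity of $\cos(r-\rho^1)$ on $\mathcal M\ssubset\mS^n_+$ also needs checking.
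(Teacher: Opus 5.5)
Your proposal is correct and follows the paper's proof. In the spherical case the paper likewise bounds the trace by $c_0A-c_1T_3A+nC_3(1-e^{-T_3\fbo})$, takes $T_3$ large, sets $\gamma=1/T_3$ so that $e^{T_3\fbo}\le e$ on $\mathcal M$, and then enlarges $C_3$ to get the boundary values. Your explicit treatment of parts (1) and (2) correctly fills in the cases the paper only calls ``similar''; in the hyperbolic case this includes absorbing the constant $+n$ coming from $-n\bar{\mathbf g}$ by enlarging $C_3$.
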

\begin{proof}
Since the proofs of parts (1), (2), and (3) are similar, we only present the proof of part (3).

In this proof, we denote $A:=C_3T_3e^{-T_3\fbo}$ and $B=-T_3A.$
When $N=\mS^{n},$ at any point $x\in\mathcal M,$ under the specific coordinates we chose in Section \ref{sec-rdf},
$$\bn^2\bbg+\bbg g^\mS=M-A\fbo I_n+\bbg I_n,$$
where $M$ is given in \eqref{hessian-psi}.
In view of \eqref{s1m} we have
\[
\begin{aligned}
\s_1(\bn^2\bbg+\bbg g^\mS)&<c_0A-\frac{T_3A\cos^2(r-\rho^1)}{\sin^2\rho^1}+n\lt[C_3\lt(1-e^{-T_3\fbo}\rt)\rt]\\
&<c_0A-c_1T_3A+\frac{nAe^{T_3\fbo}}{T_3}-\frac{nA}{T_3},
\end{aligned}
\]
where $c_i=c_i(\p\Omega_1, \p\Omega_0)$ for $i=0, 1.$
Therefore, there exists $T_3=T_3(\p\Omega_0, \p\Omega_1, n)>0$ such that
\[c_0A-c_1T_3A-\frac{nA}{T_3}<-\frac{3nA}{T_3}.\]
For this fixed $T_3,$ let
$\gamma=\frac{1}{T_3}.$ A straightforward calculation yields, when $\max\limits_{z\in\mS^{n-1}}\frac{\sin\lt(\rho^0(z)-\rho^1(z)\rt)}{\sin\lt(\rho^1(z)\rt)}<\gamma,$ we have
\[\s_1(\bn^2\bbg+\bbg g^\mS)<A\lt(\frac{-3n}{T_3}+\frac{ne}{T_3}\rt)<0.\]
Now, we choose $C_3=C_3(T_3, \p\Omega_1, \p\Omega_0)>0$ such that $\bbg>1$ on $\p\Omega_0.$
Then, $\bbg$ is the desired  upper barrier of the admissible solution $u$ of \eqref{app-main}.
\end{proof}
Similarly, we can prove the following Lemma.
\begin{lemma}
\label{con-super-out}
Let $\mathcal M=\Omega_0\setminus\bar\Omega_1,$ $\p\Omega_i=\{(z, \rho^i(z)): z\in\mS^{n-1}\}\subset\N^n(K)$ for $i=0, 1,$
and $\rho^0(z)>\rho^1(z)$ for all $z\in\mS^{n-1}.$ Set
$$\bbf:=\frac{1}{C_4}\lt(1-e^{-T_4\fbz}\rt)+\frac{K+1}{2},\,\,\mbox{for}\,\,\fbz:=\frac{\phi(r-\rho^0)}{\phi(\rho^0)},$$
we have the following conclusions.\\

 (1). When $\N^n=\R^n,$ then there exist some positive constants $C_4, T_4,$ such that $\bbf>-1/2$ on $\p\Omega_1,$
and $\s_1(\bn^2 \bbf)<0$ in $\mathcal M.$\\

 (2). When $\N^n=\mH^n,$ then there exist some positive constants $C_4, T_4,$ such that $\bbf>-1$ on $\p\Omega_1,$
and $\s_1(\bn^2 \bbf-\bbf g^{\mH})<0$ in $\mathcal M.$\\

 (3). When $\N^n=\mS^n,$ $\mathcal M\ssubset\mS^n_+,$ and
 $\max\limits_{z\in\mS^{n-1}}\frac{\sin\lt(\rho^0(z)-\rho^1(z)\rt)}{\sin\lt(\rho^0(z)\rt)}<\gamma$ for some $\gamma=\gamma(\p\Omega_1, \p\Omega_0)>0$ sufficiently small, there exist some positive constants $C_4, T_4,$ such that $\bbf>0$ on $\p\Omega_1,$
and $\s_1(\bn^2 \bbf+\bbf g^{\mS})<0$ in $\mathcal M.$\\
\end{lemma}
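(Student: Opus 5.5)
We mirror the proof of Lemma \ref{con-super-in}, now working with $\fbz=\frac{\phi(r-\rho^0)}{\phi(\rho^0)}$, which is negative in $\mathcal M$ and vanishes on $\p\Omega_0$. Set $\psi(\fbz)=\frac{1}{C_4}(1-e^{-T_4\fbz})$. Then
\[A:=\psi'=\frac{T_4}{C_4}e^{-T_4\fbz}>0,\qquad B:=\psi''=-T_4A<0 .\]
By \eqref{hessian-psi}, $\bn^2\bbf=M-AK\fbz I_n$. We want $\s_1$ of $\bn^2\bbf+K\bbf g^\N$ to be negative, and by \eqref{s1m} this trace equals
\[\frac{Aw^3}{\pr}a_{11}+\frac{Aw}{\pr}\sum_{\al\geq2}a_{\al\al}-T_4A\lt(\frac{w^2}{\phi^2(\rho^0)}-K(\fbz)^2\rt)-nAK\fbz+nK\bbf .\]
On $\bar{\mathcal M}$ the quantities $w$, $\phi(r)^{-1}$ and $|a_{pq}|$ are bounded, so the first two terms are at most $c_0A$. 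For the $B$-term, note that $\frac{w^2}{\phi^2(\rho^0)}-K(\fbz)^2=|\bn\fbz|^2$. In $\R^n$ and $\mH^n$ this is at least $\frac{1}{\phi^2(\rho^0)}\geq c_1>0$: in $\R^n$ it is just $w^2/\phi^2(\rho^0)$, and in $\mH^n$ it equals $(\cosh^2(r-\rho^0)+\rho_1^2/\sinh^2\rho^0)/\sinh^2\rho^0$. Hence $\s_1\leq c_0A-c_1T_4A+n|K||\fbz|A+nK\bbf$.

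\emph{Case $\R^n$.} Here $K=0$, and choosing $T_4>c_0/c_1$ gives $\s_1<0$ for any $C_4$.

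\emph{Case $\mH^n$.} Now $K=-1$ and $\bbf=\psi$, so the last two terms are $nA\fbz-n\psi$. Since $\fbz<0$, $nA\fbz<0$. Also $\psi\leq0$ on $\mathcal M$ (because $\fbz\leq0$), hence $-n\psi=\frac{n}{C_4}(e^{-T_4\fbz}-1)\leq\frac{nA}{T_4}$. The whole expression is therefore bounded by $A(c_0-c_1T_4+n/T_4)<0$ once $T_4$ is large.

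\emph{Case $\mS^n$.} Now $K=1$ and $B$ multiplies $\frac{\cos^2(r-\rho^0)+\rho_1^2/\sin^2\rho^0}{\sin^2\rho^0}\geq c_1>0$, because $\mathcal M\ssubset\mS^n_+$ keeps $|r-\rho^0|$ away from $\pi/2$. The extra terms are $-nA\fbz+n\bbf=nA|\fbz|+n\psi$ with $\psi\leq0$, so we need to absorb $nA|\fbz|$. By hypothesis $|\fbz|<\gamma$, and with $\gamma=1/T_4$ this gives $nA|\fbz|\leq nA/T_4$. Choosing $T_4$ large so that $c_0-c_1T_4+n/T_4<0$, we fix $T_4$ first and then set $\gamma=1/T_4$, exactly as in Lemma \ref{con-super-in}.

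\emph{Boundary conditions.} On $\p\Omega_0$ we have $\bbf=\frac{K+1}{2}$ automatically. On $\p\Omega_1$, $\fbz=\fbz|_{\p\Omega_1}<0$ is bounded away from $0$ and bounded below, so $1-e^{-T_4\fbz}\geq-(e^{T_4m}-1)$ with $m=\max|\fbz|$. Choosing $C_4$ large makes $\frac{1}{C_4}(1-e^{-T_4\fbz})>-1$ on $\p\Omega_1$, which gives $\bbf>-\frac12$, $-1$, $0$ respectively. Crucially, $C_4$ enters the $\s_1$ estimate only through the positive factor $A$, so the choice of $T_4$ is independent of $C_4$.

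\emph{Main obstacle.} The delicate point is the zeroth-order term $nK\bbf$ in the sphere case, and in $\mH^n$ the sign bookkeeping with $\fbz<0$. Everything else follows the pattern of Lemma \ref{con-super-in}. If the $\mS^n$ bound $\bbf\leq\frac{K+1}{2}$ turned out not to be dominated, I would instead use the smallness hypothesis on $\gamma$ to control $|\fbz|$, and hence the term $n(\psi+1)$, by a multiple of $A/T_4$.
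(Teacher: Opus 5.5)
Your arguments for $\R^n$ and $\mH^n$ are correct. They are the adaptation of the proof of Lemma \ref{con-super-in} that the paper intends; the paper only says ``Similarly'' and gives no separate proof.

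The $\mS^n$ case has an algebraic slip that breaks the argument. For $K=1$ the shift $\frac{K+1}{2}$ equals $1$, so $\bbf=1+\psi$ and
\[
-nA\fbz+n\bbf=nA|\fbz|+n\psi+n ,
\]
not $nA|\fbz|+n\psi$. (You use $\bbf=1+\psi$ yourself in the boundary-condition step, but drop the $1$ in the $\s_1$ computation.) The constant $n$ is not proportional to $A$, so your claim that $C_4$ enters only through $A$ is false on the sphere. The order ``fix $T_4$, then take $C_4$ large'' therefore fails. With $T_4$ fixed and $C_4\to\infty$, $A=\frac{T_4}{C_4}e^{T_4|\fbz|}\to0$ uniformly on $\bar{\mathcal M}$ while $\bbf\to1$, so $\s_1(\bn^2\bbf+\bbf g^{\mS})\to n>0$. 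This is the real difference from Lemma \ref{con-super-in}: there the boundary value $\frac{K-1}{2}=0$ makes $n\bbg$ proportional to $C_3$, hence absorbable into $A$. Your ``main obstacle'' fallback does not fix this. Bounding $n(\psi+1)$ by a multiple of $A/T_4=e^{T_4|\fbz|}/C_4$ forces an \emph{upper} bound on $C_4$, and smallness of $|\fbz|$ alone does not supply one.

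The missing idea is that $C_4$ must lie in a bounded window. Write $s=|\fbz|\leq m:=\max_z\frac{\sin(\rho^0-\rho^1)}{\sin\rho^0}<\gamma$. Your own bounds, with the constant restored, give
\[
\s_1(\bn^2\bbf+\bbf g^{\mS})\leq\frac{e^{T_4s}}{C_4}\lt[T_4\lt(c_0-c_1T_4+ns\rt)-n\rt]+n+\frac{n}{C_4}\leq n-\frac{T_4\lambda}{C_4},
\qquad \lambda:=c_1T_4-c_0-n\gamma,
\]
once $\lambda>0$. On the other hand, $\bbf>0$ on $\p\Omega_1$ requires $C_4>e^{T_4m}-1$. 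So you need $e^{T_4m}-1<C_4<T_4\lambda/n$.

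This is where the hypothesis on $\gamma$ is really used. With $\gamma=1/T_4$ you get $e^{T_4m}-1<e-1$, while $T_4\lambda/n=(c_1T_4^2-c_0T_4-n)/n$ grows with $T_4$. So for $T_4$ large, for instance $C_4=e$ works. Absorbing $nA|\fbz|$ needs no smallness at all, since $|\fbz|$ is bounded on $\bar{\mathcal M}$.
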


We now construct an upper barrier $\bar u$ described in Proposition \ref{app-prop2}.
\begin{proof}[Proof of Proposition \ref{app-prop2}.] We apply Lemma \ref{glue-lem} and set
$$\bar u(x):=\frac{\bbf(x)+\bbg(x)}{2}-\frac{h(\bbf(x)-\bbg(x))}{2},$$
where $h$ is the function as in Lemma \ref{glue-lem}, $\bbg$ is constructed in Lemma \ref{con-super-in}, and $\bbf$ is constructed in Lemma \ref{con-super-out}.
It is easy to see that $\bar u\leq\min(\bbf, \bbg)$ and
\[
\bar u(x)=
\left\{\begin{aligned}
&\bbg(x),\,\,\mbox{if}\,\,\bbf(x)-\bbg(x)>\delta\\
&\bbf(x),\,\,\mbox{if}\,\,\bbg(x)-\bbf(x)>\delta.
\end{aligned}\right.
\]
We only need to verify that $\bar u$ satisfies $\s_1(\bn^2\bar u+K\bar ug^\N)<0$  on the set $\mathcal C:=\{x\in\mathcal M: \lt|\bbf(x)-\bbg(x)\rt|\leq \delta\}.$
By the convexity of $h$ we have
\[\bar{\Laplace}\bar u\leq\frac{1-t(x)}{2}\bar{\Laplace}\bbf+\frac{1+t(x)}{2}\bar{\Laplace}\bbg,\]
where $t(x)=h'(\bbf(x)-\bbg(x)).$ Therefore,
\[\begin{aligned}
\bar\Laplace\bar u+nK\bar u&\leq\frac{1-t(x)}{2}\bar{\Laplace}\bbf+\frac{1+t(x)}{2}\bar{\Laplace}\bbg
+nK\lt(\frac{\bbf+\bbg}{2}-\frac{h(\bbf-\bbg)}{2}\rt)\\
&=\frac{1-t(x)}{2}\lt(\Laplace\bbf+nK\bbf\rt)+\frac{1+t(x)}{2}\lt(\Laplace\bbg+nK\bbg\rt)
+nK\lt(\frac{t(x)}{2}(\bbf-\bbg)-\frac{h(\bbf-\bbg)}{2}\rt)\\
&\leq\frac{1-t(x)}{2}\lt(\Laplace\bbf+nK\bbf\rt)+\frac{1+t(x)}{2}\lt(\Laplace\bbg+nK\bbg\rt)+\frac{n\delta}{2}.
\end{aligned}\]

It is easy to see that when $\delta>0$ is chosen small enough,
$\bar u$ is the desired  upper barrier of the admissible solution $u$ of \eqref{app-main}.
\end{proof}

\section*{Acknowledgements}
This work was completed while L.X. was visiting the Cornell Mathematics Department with support from the Ruth I. Michler Memorial Prize. L.X. is very grateful to the Michler family, the Association for Women in Mathematics, and the Cornell Mathematics Department.

\end{document}